\newtheorem{theorem}{Theorem}[subsection]
\newtheorem{corollary}[theorem]{Corollary}
\newtheorem{definition}[theorem]{Definition}
\newtheorem{example}[theorem]{Example}
\newtheorem{lemma}[theorem]{Lemma}
\newtheorem{problem}[theorem]{Problem}
\newtheorem{proposition}[theorem]{Proposition}
\newtheorem{remark}[theorem]{Remark}
\newtheorem{contributions}[theorem]{Contributions}
\newcommand{\R}{\mathbb{R}}
\newcommand{\G}{\mathbb{G}}
\newcommand{\Ff}{\mathbb{F}}
\newcommand{\X}{\mathbb{X}}
\def \GG{{\mathcal G}}
\def \HH{{\mathcal H}}
\def \DD{{\mathcal D}}
\def \FF{{\mathcal F}}
\def \CC{{\mathcal C}}
\def \FF{{\mathcal F}}
\def \Pe{{\mathcal P}}
\def \VV{{\mathcal V}}
\def \EE{{\mathcal E}}
\DeclareMathOperator{\im}{im}
\DeclareMathOperator*{\colim}{colim}
\newcommand{\set}[1]{\{\,#1\,\}}
\newcommand{\x}{\mathbb{x}}
\newcommand{\Hg}{\mathrm{H}}
\newcommand{\rto}{\rightarrow}
\newcommand{\from}{\leftarrow}
\newcommand{\kk}{\mathrm{k}}
\newcommand{\meet}{\wedge}
\newcommand{\join}{\vee}
\newcommand{\bigmeet}{\bigwedge}
\newcommand{\bigjoin}{\bigvee}
\newcommand\dfn{\textbf}
\begin{document}

\title{\textbf{A LATTICE FOR PERSISTENCE}}

\author{Jo\~ao Pita Costa and Primo\v z \v Skraba}
\address{In\v stitut Jo\v zef Stefan,\\
Jamova Cesta 39, 1000 Ljubljana, Slovenia.
}
\date{\today}

\maketitle

\linenumbers


\begin{abstract} 
The intrinsic connection between lattice theory and topology is fairly well established.
For instance, the collection of open subsets of a topological subspace always forms a distributive lattice. 
Persistent homology has been one of the most prominent areas of research in computational topology in the past 20 years.
In this paper we will introduce an alternative interpretation of persistence based on the study of the order structure of its correspondent lattice. 
Its algorithmic construction leads to two operations on homology groups which describe an input diagram of spaces as a complete Heyting
algebra, which is a generalization of a Boolean algebra.  
We investigate some of the properties of this lattice, the algorithmic implications of it, and some possible applications.
\end{abstract}


\bigskip

\renewcommand{\contentsname}{Table of Contents}

\tableofcontents 

\bigskip


\section*{Introduction}
\label{Introduction}

Persistent (co)homology is one of the central objects of study in applied and computational topology~\cite{Edel00}. Numerous extensions have been proposed to the original formulation including zig-zag persistence~\cite{ZigZag} and multidimensional persistence~\cite{Carl09}, whereas the original persistence looks at a filtration (i.e., an increasing sequence of spaces). Zig-zag persistence extended the theory and showed that the direction of the maps does not matter, using tools from quiver theory. In multidimensional persistence, multifiltrations are considered. In this paper, we also look at the problem of persistence in more general diagrams of spaces using tools from lattice theory. There is another key difference in this work however. Rather than try to find a decomposition of the diagram of spaces into indecomposables, we concentrate on pairs of spaces within diagrams addressing the more difficult problem of indecomposables in the sequel paper. 

Lattice theory is the study of order structures. The deep connections between topology and lattice theory has been known since the work of Stone~\cite{Joh86}, showing a duality between Boolean algebras and certain compact and Hausdorff topological spaces, called appropriately \emph{Stone spaces}.  
In the first section of this paper we present the basic concepts of lattice theory.
These preliminaries mostly refer to classical results on distributive lattices and Heyting algebras, and can be skipped by the reader that is familiar with the subject.
A study of lattice theory and, in general, of universal algebra, can be found in \cite{Ba40}, \cite{Sa81}, \cite{Gr71} and \cite{Gr79}.

A description of the topological background follows in the second section, reviewing the main concepts and results of Persistent Homology and suggesting several examples that are a motivation to this study. 
Good reviews on topological data analysis are given in \cite{TD} and \cite{Zom05}, on persistent homology are given in \cite{Skr13} and \cite{Zom13}, and on zig-zag persistence are given in \cite{ZigZag}, \cite{Car09} and \cite{Oud12}.

In the following section we describe the order structure of our input diagram of spaces by a partial order induced by certain maps between vector spaces, and show that this order provides a lattice structure.
We construct the meet and join operations using the natural concepts of limits and colimits of linear maps, and show that this construction stabilizes.  
We shall see that the constructed lattice is a complete Heyting algebra, one of the algebraic objects of biggest interest in topos theory.

From the latter results we discuss connections with persistent homology, and give a different perspective on several aspects of this theory. 
In particular, we look at diagrams of spaces and retrieve general laws both based on concrete examples (like standard or zig-zag persistence) and on the interpretation of laws derived from the lattice theoretic analysis. Finally  we introduce a few algorithmic applications which we will develop further in a subsequent paper.


%
\section{Preliminaries}
\label{Preliminaries}

A \emph{lattice} is a partially-ordered set (or poset) expressed by $(L,\leq)$  for which all pairs of elements have an infimum and a supremum, denoted by $\wedge$ and $\vee$, respectively, commonly known as the \emph{meet} and \emph{join} operations. 
The lattice properties correspond to the minimal structure that a poset must have to be seen as an algebraic structure. 
Such algebraic structure $(L;\wedge ,\vee)$ is given by two operations $\wedge$ and $\vee$ satisfying:
\begin{enumerate}
\item[L1.] \emph{associativity}: $x\wedge (y\wedge z) = (x \wedge y)\wedge z$ and $x\vee (y\vee z) = (x \vee y)\vee z$, 
\item[L2.] \emph{idempotency}: $x\wedge x = x = x\vee x$, 
\item[L3.] \emph{commutativity}: $x\wedge y=y\wedge x$ and $x\vee y=y\vee x$ 
\item[L4.]  \emph{absorption}: $x\wedge (x\vee y)=x=x\vee (x\wedge y)$.
\end{enumerate}
The equivalence between this algebraic perspective of a lattice $L$ and its ordered perspective is given by the following equivalence: for all $x,y\in L$, $x\leq y$ iff $x\wedge y=x$ iff $x\vee y=y$. 
At that stage the order and the algebraic structures hold the same information over different perspectives. 
If every subset of a lattice $L$ has a supremum and an infimum, $L$ is named a \emph{complete lattice}.
All finite lattices are complete. 
A partial order is named \emph{total order} if every pair of elements is related, that is, for all $x,y\in A$, $x\leq y$ or $y\leq x$. 
On the other hand, an \emph{antitotal order} is a partial order for which no two elements are related. 
Examples of lattices include the power set of a set ordered by subset inclusion, or the collection of all partitions of a set ordered by refinement.
Every lattice can be determined by a unique undirected graph for which the vertices are the lattice elements and the edges correspond to the partial order: the \emph{Hasse diagram} of the lattice. 
With additional constraints on the operations we get different types of lattices.  
In particular, a lattice $L$ is \emph{distributive} if, for all $x,y,z\in S$, it satisfies one of the following equivalent equalities:

\begin{itemize}
\item[(d1)] $x\wedge (y\vee z)=(x\wedge y)\vee (x\wedge z)$; 
\item[(d2)] $x\vee (y\wedge z)=(x\vee y)\wedge (x\vee z)$;
\item[(d3)] $(x\vee y) \wedge (x\vee z) \wedge (y\vee z) = (x\wedge y)\vee (x\wedge z)\vee (y\wedge z)$. 
\end{itemize}

The lattice of subsets of a set ordered by inclusion is a distributive lattice.
The lattice of normal subgroups of a group as well as the lattice of subspaces of a vector space are not distributive (cf. \cite{Ba40}).
A lattice $L$ is distributive if and only if for all $x,y,z\in L$, $x\wedge y=x\wedge z$ and $x\vee y=x\vee z$ imply $y=z$ (\cite{Ba40}).
A \emph{Boolean algebra} is a distributive lattice with a unary operation $\neg$ 
and nullary operations $0$ and $1$ such that for all elements $a\in A$, $a\vee 0 = a$ and $a\wedge 1=a$ as $a\vee \neg a = 1$ and  $a\wedge \neg a = 0$.
While the power of a set with intersection and union is a Boolean algebra, total orders are examples of distributive lattices that are not Boolean algebras in general.
A bounded lattice $L$ is a \emph{Heyting algebra} if, for all $a,b\in  L$ there is a greatest element $x\in L$ such that $a\wedge x\leq b$. This element is the \emph{relative pseudo-complement} of $a$ with respect to $b$ denoted by $a\Rightarrow b$. 
Examples of Heyting algebras are the open sets of a topological space, as well as all the finite nonempty total orders (that are bounded and complete). 
Furthermore, every complete distributive lattice $L$ is a Heyting algebra with the implication operation given by $x \Rightarrow y = \bigvee \set{x\in L \mid x \wedge a \leq b}$.

\begin{contributions}
Universal algebra and lattice theory, in particular, are transversal disciplines of Mathematics and have proven to be of interest to the study of any algebraic structure. In the following sections we will describe the construction of a lattice completing a given commutative diagram of homology groups. We will show that this lattice is complete and distributive, thus constituting a complete Heyting algebra. Despite the nice algebraic properties that hold in this structure as a consequence of being such an algebra, it does not constitute a Boolean algebra.
\end{contributions}


%
\section{Problem Statement}
\label{Problem Statement}

We assume a basic familiarity with algebraic topological notions such as (co)homology, simplicial complexes, filtrations, etc. 
For an overview, we recommend the references \cite{Hat00} for algebraic topology, as well as \cite{Ed10} and \cite{Zom05} for applied/computational
topology.
We motivate our constructions with the examples in the following paragraphs.

Consider persistent homology, presented in \cite{Edel00}. 
Let $\X$ be a space and $f:\X \rto \R$ a real function.  
The object of study of persistent homology is a filtration of $\X$, i.e., a monotonically non-decreasing sequence
\begin{equation*}
\emptyset = \X_0 \subseteq \X_1 \subseteq \X_2 \subseteq \ldots\subseteq\X_{N-1} \subseteq \X_N = \X
\end{equation*}
To simplify the exposition, we assume that this is a discrete finite filtration of tame spaces. Taking the homology of each of the associated chain complexes, we obtain 
\begin{equation*}
 \Hg_*(\X_0) \rto  \Hg_*(\X_1) \rto  \Hg_*(\X_2) \rto \ldots\rto \Hg_*(\X_{N-1}) \rto  \Hg_*(\X_N)
\end{equation*}
We take homology over a field $\kk$ -- therefore the resulting homology groups are vector spaces and the induced maps are linear maps. In~\cite{Edel00}, the $(i,j)-$\emph{persistent homology groups}  of the filtration are defined as
\begin{equation*} 
\Hg^{i,j}_*(\X) = \im (\Hg_*(\X_i) \rto   \Hg_*(\X_j))
\end{equation*}

This motivates the idea for the construction of a totally ordered lattice.
To see this, let us consider the set of the homology groups with a partial order induced by the indexes of the spaces in the filtration. We can define two lattice operations $\meet$ and $\join$ as follows:
\begin{itemize}
\item[] $\Hg_*(\X_i) \join \Hg_*(\X_j) = \Hg_*(X_{\max(i,j) })$
\item[] $\Hg_*(\X_i) \meet \Hg_*(\X_j) = \Hg_*(X_{\min(i,j) })$
\end{itemize}
With these operations we get a finite total order and, thus, a complete Heyting algebra (see this discussion in the following section). 
The definition of persistent homology groups can then be rewritten as follows:

\begin{definition}\label{rank}
For any two elements $\Hg_*(\X_i)$ and $ \Hg_*(\X_j)$, the rank of the persistent homology classes is 
\[
\im (\Hg_*(\X_i\meet\X_j) \rto \Hg_*(\X_i\join \X_j)).
\]
\end{definition}


The case of a filtration, where a total order exists, does not have a very interesting underlying order structure. 
Let us now look at the case where we have more than one parameter. 
We define a \emph{diagram} to be a directed acyclic graph of vector spaces (vertices) and linear maps between them (edges). 
This is known as multidimensional persistence and has been studied in \cite{Carl09} and \cite{Zom09}. 
We shall start by looking at a bifiltration, i.e., a filtration on two dimensions (or parameters).
Observe that, for related elements of the filtration, these operations coincide with the ones defined above for the standard persistence case. 
However, when we consider incomparable elements, the meet and join operations are given by the rectangles they determine. 
Adjusting our definitions from above we can define the lattice operations in a natural way by setting:

\begin{itemize}
\item[] $\Hg_*(\X_{i,j}) \join \Hg_*(\X_{k,\ell}) = \Hg_*(X_{\max(i,k),\max(j,\ell) })$
\item[] $\Hg_*(\X_{i,j}) \meet \Hg_*(\X_{k,\ell}) = \Hg_*(X_{\min(i,k),\min(j,\ell) })$
\end{itemize}

Consider the bifiltration of dimensions $4\times 4$ from Figure \ref{figmulti}. The Hasse diagram of the correspondent underlying algebra is presented in Figure \ref{figmultidim}.
In that diagram, $\X_{01}\leq \X_{31}$ and clearly, $\X_{01}\meet \X_{31}=\X_{01}$ while $\X_{01}\join \X_{31}=\X_{31}$.
On the other hand, $\X_{02}$ and $\X_{11}$ are unrelated with $\X_{02}\meet \X_{11}=\X_{01}$ while $\X_{02}\join \X_{11}=\X_{12}$.
Note that, by the commutativity of the diagram, any two elements which have the same meet and join define the same rectangle in the bifiltration, determined by the properties in the Hasse diagrams represented in Figure \ref{figmultidim}. 
By the assumed commutativity of the diagram of spaces, any path through the rectangle has equal rank and so the map of the meet to join gives the rank invariant of Definition \ref{rank}.

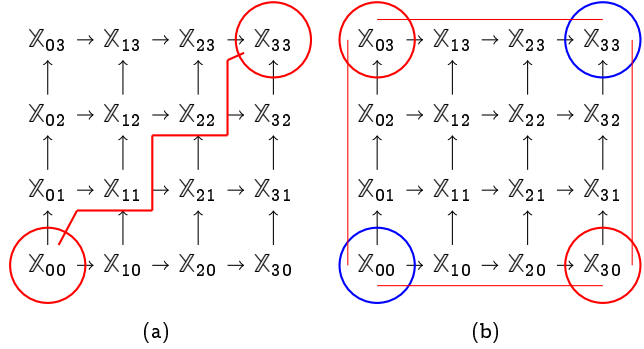
\begin{figure}

\subfigure[]{

\begin{tikzpicture} 
         \foreach \i in {0,...,3}{
           \foreach \j  in {0,...,3}{
             \node (p\i\j) at (\i,\j) {$\X_{\i\j}$}    ;
           }
         }
         \foreach \i [evaluate=\i as \x using int(\i+1)] in {0,...,2}{
           \foreach \j in {0,...,3}{
             \draw[->] (p\i\j) -- (p\x\j)  ;
           }
         }
     \foreach \i  in {0,...,3}{
           \foreach \j [evaluate=\j as \y using int(\j+1)] in {0,...,2}{
             \draw[->] (p\i\j) -- (p\i\y)  ;
           }
         }
\draw[thick, red] (p00) circle (0.5cm);
\draw[thick, red] (p33) circle (0.5cm);
\draw[thick,red] (p00) -- (p01.south east);

\draw[thick,red] (p01.south east) -- (p11.south east); 
\draw[thick,red] (p11.south east) -- (p12.south east); 
\draw[thick,red] (p12.south east) -- (p22.south east);
\draw[thick,red] (p22.south east) -- (p23.south east);
\draw[thick,red] (p23.south east) -- (p33);

\end{tikzpicture}
}
\subfigure[]{

\begin{tikzpicture} 
         \foreach \i in {0,...,3}{
           \foreach \j  in {0,...,3}{
             \node (p\i\j) at (\i,\j) {$\X_{\i\j}$}    ;
           }
         }
         \foreach \i [evaluate=\i as \x using int(\i+1)] in {0,...,2}{
           \foreach \j in {0,...,3}{
             \draw[->] (p\i\j) -- (p\x\j)  ;
           }
         }
     \foreach \i  in {0,...,3}{
           \foreach \j [evaluate=\j as \y using int(\j+1)] in {0,...,2}{
             \draw[->] (p\i\j) -- (p\i\y)  ;
           }
         }
\draw[thick, red] (p30) circle (0.5cm);
\draw[thick, red] (p03) circle (0.5cm);

\draw[thick, blue] (p00) circle (0.5cm);
\draw[thick, blue] (p33) circle (0.5cm);

\draw[red] (p00.west) -- (p03.west);
\draw[red] (p00.south) -- (p30.south);
\draw[red] (p03.north) -- (p33.north);
\draw[red] (p30.east) -- (p33.east);

\end{tikzpicture}
}
\caption{The lattice operations in the case of a bifiltration. (a) If
  the two elements are comparable, by the commutativity of the diagram
  we can choose any path to find the persistent homology groups. (b)
  If the elements are incomparable, we can find the smallest and
  largest elements where they become comparable. In both cases we
  recover the rank invariant of \cite{Carl09} }
\label{figmulti}
\end{figure}

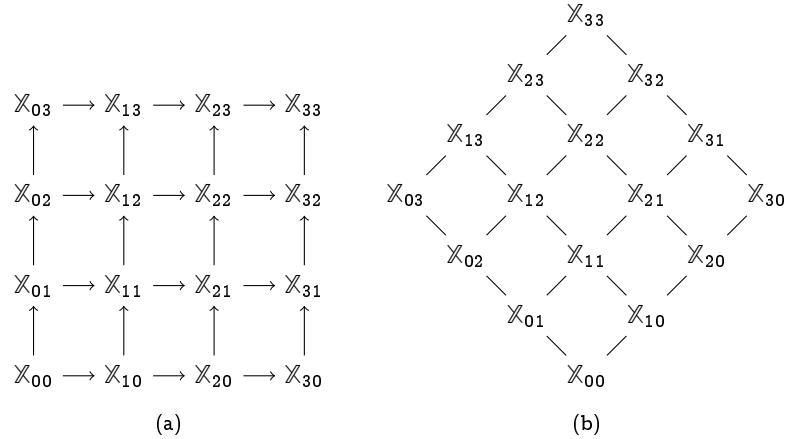
\begin{figure}
\centering
\subfigure[]{%
     \begin{tikzpicture}[scale=.6] 
         \foreach \i  [evaluate=\i as \x using int(2*\i)] in {0,...,3}{
           \foreach \j [evaluate=\j as \y using int(2*\j)] in {0,...,3}{
             \node (p\i\j) at (\x,\y) {$\X_{\i\j}$}    ;
           }
         }
         \foreach \i [evaluate=\i as \x using int(\i+1)] in {0,...,2}{
           \foreach \j [evaluate=\j as \y using int(\j+1)]  in {0,...,2}{
             \draw[->] (p\i\j) -- (p\i\y)  ;
             \draw[->] (p\i\j) -- (p\x\j)  ;
           }
         }
         \foreach \j [evaluate=\j as \y using int(\j+1)]  in {0,...,2}{
           \draw[->] (p3\j) -- (p3\y)  ;
           \draw[->] (p\j3) -- (p\y3)  ;
           }
     \end{tikzpicture}
}     
 \quad
 \subfigure[]{%
     \begin{tikzpicture}[scale=.8]

  \node (03) at (-3,1){$\X_{03}$} ;
  \node (13) at (-2,2){$\X_{13}$} ;
  \node (23) at (-1,3){$\X_{23}$} ;
  \node (33) at (0,4){$\X_{33}$} ;
  \node (32) at (1,3){$\X_{32}$} ;  
  \node (31) at (2,2){$\X_{31}$} ;
  \node (30) at (3,1){$\X_{30}$} ;  
  \node (22) at (0,2){$\X_{22}$} ;
  \node (12) at (-1,1) {$\X_{12}$} ; 
  \node (21) at (1,1){$\X_{21}$} ;
  \node (02) at (-2,0){$\X_{02}$} ;
  \node (20) at (2,0){$\X_{20}$} ;
  \node (11) at (0,0) {$\X_{11}$} ;
  \node (01) at (-1,-1){$\X_{01}$} ;
  \node (10) at (1,-1) {$\X_{10}$} ;
  \node (00) at (0,-2) {$\X_{00}$} ;
  \draw (00) -- (01) -- (02) -- (03) -- (13) -- (23) -- (33) -- (32) -- (31)-- (30) -- (20) -- (10) -- (00);
  \draw (02) -- (12) -- (22) -- (32);
  \draw (01) -- (11) -- (21) -- (31);
  \draw (10) -- (11) -- (12) -- (13);      
  \draw (20) -- (21) -- (22) -- (23);  
\end{tikzpicture}
}
\caption{The diagram of a bifiltration of dimensions $4\times 4$ (a) and the Hasse diagram of the correspondent underlying Heyting algebra (b).}
\label{figmultidim}       
\end{figure}     

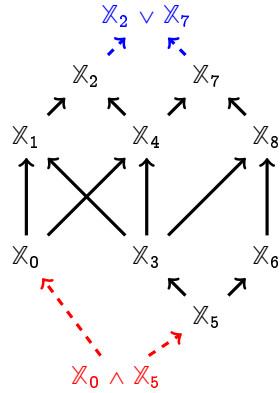
\begin{figure}

\begin{center}
\begin{tikzpicture}[very thick, scale=0.8]
\node (a) at (0,0) {$\X_0$};
\node (b) at (0,2) {$\X_1$};
\node (c) at (1,3) {$\X_2$};
\node (d) at (2,0) {$\X_3$};
\node (e) at (2,2) {$\X_4$};
\node (f) at (3,-1) {$\X_5$};
\node (g) at (4,0) {$\X_6$};
\node (i) at (3,3) {$\X_7$};
\node (h) at (4,2) {$\X_8$};
\draw[->] (a) -- (b);
\draw[->] (b) -- (c);
\draw[->] (a) -- (e);
\draw[->] (d) -- (e);
\draw[->] (d) -- (b);
\draw[->] (e) -- (c);

\draw[->] (e) -- (i);
\draw[->] (h) -- (i);
\draw[->] (g) -- (h);
\draw[->] (f) -- (g);
\draw[->] (f) -- (d);

\draw[->] (d) -- (h);

\node (e1) at (1.5,-2) {\textcolor{red}{$\X_0 \wedge \X_5$}};
\draw[->,red, dashed] (e1) -- (f) ;
\draw[->,red, dashed] (e1) -- (a) ;

\node (e2) at (2,4) {\textcolor{blue}{$\X_2 \vee \X_7$}};
\draw[->,blue, dashed] (i) -- (e2) ;
\draw[->,blue, dashed] (c) -- (e2) ;

\end{tikzpicture}
\end{center}

\caption{General commutative diagrams of spaces and linear maps between them.}
\label{gendigm}
\end{figure}

Both of these cases are highly-structured. 
Consider the case of a more general diagram of homology groups in Figure \ref{gendigm}.
While we can embed this diagram in a multifiltration, by
augmenting the diagram with $0$ and unions of space, however the result is not very informative.
The defined lattice operations can bring a complementary knowledge to this study.
This is the motivation for the construction we present in this
paper. Since we deal with homology over a field, we look to analyze more
general but commutative diagrams of vector spaces. 

\begin{problem}
Given a commutative diagram of vector spaces and linear maps between
them, we  construct an order structure that completes it into a lattice, study its 
algebraic properties and develop algorithms based on this.  
\end{problem}

\begin{remark}
Quiver theory is also concerned with diagrams of vector spaces and
linear maps. However, a key difference is that the diagrams in quiver
theory are generally not required to be commutative. 
\end{remark}

\begin{remark}
We concentrate on the persistence between two elements rather than decomposition of the entire diagram. 
While we believe the constructions in this paper can aid this
decomposition, it does not immediately follow. As such, any reference
to a diagram should be understood as referring to the input collection
of vector spaces and linear maps, corresponding to the partial Hasse diagram of the underlying lattice structure, 
rather than a persistence diagram.
\end{remark}


%
\section{Lattice Structure}
\label{Structural Theory}

Here we introduce how to retrieve the order information from a diagram of vector spaces and linear maps, and construct the lattice operations determined by that order, where the elements are vector spaces. 
The linear maps between them will define the relations between those vector spaces and limit concepts like equalizers and coequalizers (roughly, an equalizer is a solution set of equations while a coequalizer is a generalization of a quotient by an equivalence relation) will serve us to define biggest and least elements.   

\subsection{The Lattice Operations}


Consider a diagram of vector spaces and linear maps and assume one unique component.
The underlying ordered structure is a poset defined as follows: 

\begin{definition}
For all vector spaces $A$ and $B$ of a given diagram $\DD$,
\[
A\leq B\text{  if there exists a linear map  }f:A\rightarrow B.
\]
The partial order $\leq $ is, thus, the set of ordered pairs correspondent to the linear maps in the commutative diagram of spaces given as input.
The identity map ensures the reflexivity of the relation: for all vector spaces $A$ the identity map $id_A$ provides the endorelation $\lefttorightarrow A$.
Transitivity is given by the fact that the composition of linear maps is a linear map and by the assumption that all diagrams are commutative. 
Antisymmetry is given by the fact that $A\leftrightarrows B$ implies $A\leftrightsquigarrow B$, that is, $A$ and $B$ are equal up to isomorphism: 
in detail, having the identity morphisms and usual composition of linear maps, the existence of linear maps $f:A\rightarrow B$ and $g:B\rightarrow A$ imply that $g\circ f=id_{A}$ and that $f\circ g=id_{B}$, as required.
This partial order does not yet have to constitute a lattice but will be completed into one, due to the following constructions. 
The extension of the partial order $\leq$ will be noted by the same symbol, being a part of that bigger partial order.
\end{definition}

\begin{remark}
We consider the object under study to be a commutative diagram of vector spaces and linear maps. As vector spaces are determined up to isomorphism by rank, the equivalence deserves some additional comments. As described above, the reverse maps exist in the case of isomorphisms. This further ensures that the poset structure is well-defined since we cannot arbitrarily reverse the direction of the arrows (as is often the case in representation theory, where the direction of arrows often does not matter). 
If we were to reverse an arrow with a non-unique (but equal rank) map, it is clear that the composition will not commute with identity unless the map is an isomorphism. 
Likewise,  for equivalence we not only require the vector spaces to be isomorphic (of the same rank) but also that there exists a composition of maps in the diagram (possibly including inverses) for which an isomorphism exists. 
Note that this does not imply that all the maps must be isomorphisms. 
\end{remark}

%
%
%
%
In the following paragraphs we will describe the construction of the operations $\wedge $ and $\vee$ over a given diagram $\DD$ of vector spaces and linear maps.
The construction of these lattice operations is based on the concept of direct sum, and the categorical concepts of \emph{limit} and \emph{colimit}. 
In particular, it is based in the generalized notions of \emph{equalizer} and \emph{coequalizer} that we describe right away.
See the details of some of these constructions in Appendix \ref{Algebraic Constructions}. 
%
%
As we assume that all diagrams of vector spaces commute, the categorical concepts of \emph{equalizer} and \emph{coequalizer} can be adapted to the framework of this paper in the following way:

\begin{definition}\label{coeq}
Given a pair of vector spaces $A$ and $C$ with two linear maps $f,g:A\Rightarrow B$ between them: 
\begin{itemize}
\item[(i)] the \emph{equalizer} of $f$ and $g$ is a pair $(E, e)$ where $E$ is a vector space (usually called \emph{kernel set} of the equalizer) and $e:E\rightarrow A$ is a linear map such that $fe=ge$, for any other vector space $E'$ and linear map $e' : E' \rightarrow A$ there exists a unique linear map $\phi : E' \rightarrow E$.
\item[(ii)] the $\emph{coequalizer}$ of $f$ and $g$ is a pair $(H, h)$ where $H$ is a vector space (usually called the \emph{quotient set} of the coequalizer) and $h:A\rightarrow H$ is a linear map such that, for any other vector space $H'$ and linear map $h' : A \rightarrow H'$ there exists a unique linear map $\phi : H \rightarrow H'$.
\end{itemize}
\end{definition}

\begin{remark}
The intuitive idea of looking at the equalizer of two maps $f$ and $g$ as the solution set of the equation $f(x)=g(x)$ in the appropriate domain, is extended to a solution set of several equations. 
Indeed, any system of equations can be seen as one unique (matrix) equation with all the equations that it is constituted being considered as vectors in this matrix. 
Dual remarks hold for coequalizers of more than two maps.

The (co)equalizer is sometimes identified with the kernel set (quotient set). 
Both the concepts of equalizer and coequalizer can be generalized to comprehend the equalization of more than two maps which corresponds to a solution set of several equations. 
Given vector spaces $A$, $B$, $C$ and $D$, with linear maps $f_A:A\rto C$, $f_B:B\rto C$, $g_A:D\rto A$ and $g_B:D\rto B$ we can express these relations by the linear maps $f:A\oplus B\rto C$ and $g:D\rto A\oplus B$ without loss of information. 
If $\FF=\set{f,g,h,\dots}$ its equalizer may be written as $eq(f,g,h,\dots )$ while its coequalizer is written as $coeq(f,g,h,\dots )$.
For the sake of intuition, the kernel set can be thought of as the space of solutions of all the equations determined by the linear maps that are equalized, while the quotient set of a coequalizer can be thought of as the space of constraints that an equation must satisfy, as the space of obstructions, regarding the equations determined by the considered linear maps.
Indeed, for modules over a commutative ring, the equalizer of $f$ and $g$ is $ker(f-g)$ while their coequalizer is $coker(f-g)=B/\im(f-g)$. 
This and other topics are discussed in detail in the appendix of this paper.  
\end{remark}

\begin{definition}
A vector space is a \emph{source} if it is no codomain of any map, and dually it is a \emph{target} if it is no domain of any map (corresponding to the categorical concepts of \emph{initial element} and \emph{terminal element}, respectively. 
Moreover, we call \emph{common source} of a collection of spaces $D_i$ in the given diagram $\DD$, a space $D\in \DD$ mapping in $\DD$ to each of the spaces $D_i$.
Dually, we call \emph{common target} of the collection $D_i$ to a space $D\in \DD$ such that each $D_i$ maps to $D$.
\end{definition}

\begin{remark}\label{exist}
Given vector spaces $X$, $Y$, $Z$ and $W$ in a diagram $\DD$,

\begin{itemize}
\item[(i)] if $Z$ is a common target of $X$ and $Y$ then $Z$ is a target of $X\oplus Y$;
\item[(ii)] if $W$ is a common source of $X$ and $Y$ then $W$ is a source of $X\oplus Y$.
\end{itemize}
While $(i)$ follows from the fact that the direct sum is the coproduct in the category of vector spaces and linear maps, to see $(ii)$ consider the inclusion maps $i_{X}:X\rightarrow X\oplus Y$ and $i_{Y}:Y\rightarrow X\oplus Y$. To see $(ii)$ consider the inclusion maps $i_{X}:X\rightarrow X\oplus Y$ and $i_{Y}:Y\rightarrow X\oplus Y$.
Due to the hypothesis, there exist maps $f:W\rightarrow X$ and $g:W\rightarrow Y$. Thus, the compositions $i_{X}\circ f$ and $i_{Y}\circ g$ ensure the inequality $W\leq X\oplus Y$.  
Moreover, 
\begin{itemize}
\item[(iii)] if $Z$ is a common target of $X$ and $Y$, the limit of all linear maps from $X$ and $Y$ to $Z$ is a subalgebra of $X\oplus Y$;
\item[(iv)] if $W$ is a common source of $X$ and $Y$, the colimit of all linear maps from $W$ to $X$ and $Y$ is a quotient algebra of $X\oplus Y$.
\end{itemize}
both of them constituting vector spaces. 
\end{remark}


\begin{definition}
Let $A$ and $B$ be vector spaces and $I$ and $J$ be arbitrary sets. 
Consider the family of linear maps from $A\oplus B$ to all vector spaces with common sources $A$ and $B$, i.e., 
\[
\Ff_k = \{f_i: A\oplus B\rightarrow X_k \mid \text{ for all vector spaces } X_k\geq A,B \text{ and }  i\in I \}
\] 
and, dually, the family of linear maps from all vector spaces with common targets $A$ and $B$ to $A\oplus B$, i.e.,
\[
\G_k = \{g_i: Y_k\rightarrow A\oplus B \mid \text{ for all vector spaces } Y_k \leq A,B \text{ and } i\in I \}.
\]
Define $A\wedge B$ to be the kernel set $\EE $ of the equalizer of the linear maps of the family $\Ff_{k}$, $eq(\oplus_{k\in J} \Ff_{k})$, and $A\vee B$ to be the quotient set $\CC$ of the coequalizer of the linear maps of the family $\G_{k}$, $coeq(\oplus_{k\in J} \G_{k})$. These operations are well defined due to Remark \ref{exist}.
\end{definition}

\begin{figure}

\subfigure[meet operation $\wedge$]{

\begin{tikzpicture}[very thick]
  \node (a) at (0,0) {$A$};
  \node (b) at (4,0) {$B$};

  \draw[->] (0,-2) -- (a);
  \draw[->] (1,-2) -- (a);
  \draw[->] (-1,-2) -- (a);

  \draw[->] (4,-2) -- (b);
  \draw[->] (3,-2) -- (b);
  \draw[->] (5,-2) -- (b);

  \draw[<-] (0,2) -- (a);
  \draw[<-] (1,2) -- (a);
  \draw[<-] (-1,2) -- (a);

  \draw[<-] (4,2) -- (b);
  \draw[<-] (3,2) -- (b);
  \draw[<-] (5,2) -- (b);
  
 \draw[rotate around={-30:(0.5,1)},blue] (0.5,1)  ellipse (0.5 and 2);
 \draw[rotate around={30:(3.5,1)},blue] (3.5,1)  ellipse (0.5 and 2);
 \node (y) at (2,-2) {\textcolor{blue}{$A\wedge B$}};
 
 \draw[dashed,blue] (a) -- (y) -- (b);

\end{tikzpicture}
}
\subfigure[join operation $\vee $]{

\begin{tikzpicture}[very thick]
  \node (a) at (0,0) {$A$};
  \node (b) at (4,0) {$B$};

  \draw[->] (0,-2) -- (a);
  \draw[->] (1,-2) -- (a);
  \draw[->] (-1,-2) -- (a);

  \draw[->] (4,-2) -- (b);
  \draw[->] (3,-2) -- (b);
  \draw[->] (5,-2) -- (b);

  \draw[<-] (0,2) -- (a);
  \draw[<-] (1,2) -- (a);
  \draw[<-] (-1,2) -- (a);

  \draw[<-] (4,2) -- (b);
  \draw[<-] (3,2) -- (b);
  \draw[<-] (5,2) -- (b);
  
  \draw[rotate around={30:(0.5,-1)},red] (0.5,-1)  ellipse (0.5 and 2);
  \draw[rotate around={-30:(3.5,-1)},red] (3.5,-1)  ellipse (0.5 and 2);

  \node (x) at (2,2) {\textcolor{red}{$A\vee B$}};
  
   \draw[dashed,red] (a) -- (x) -- (b);

\end{tikzpicture}
}

\caption{Intuition of the defined lattice operations meet, $\wedge $ and join, $\vee $.(a) Given two elements, $A$ and $B$, the meet is defined by looking at  all the spaces which $A$ and $B$ map into to compare them. (b) For the join, we use the dual construction and compare $A$ and $B$ using all the spaces which map into $A$ and $B$.} 
\label{intuition}
\end{figure}
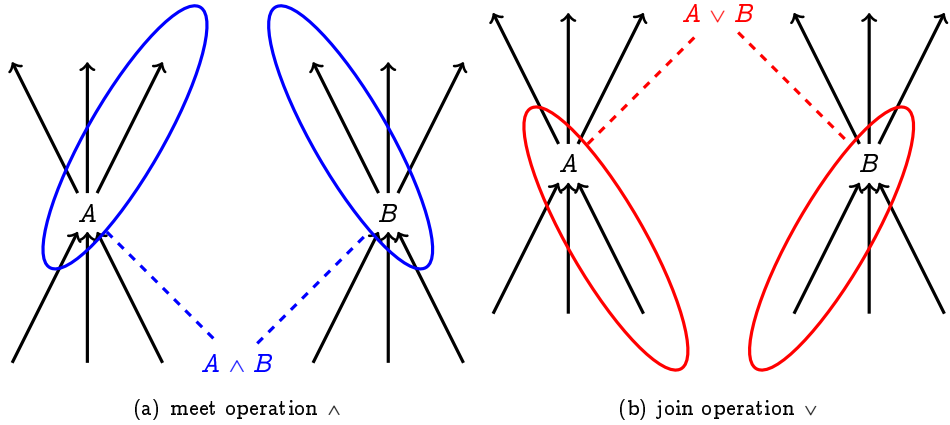

\begin{remark}
Intuitively, whenever $A$ and $B$ are vector spaces we construct $A\vee B$ as the limit of all vector spaces that have maps coming in from both $A$ and $B$ by ''gathering" together all those maps to all vector spaces $C_{i}$ with common sources $A$ and $B$: in particular, this limit is the equalizer of such maps.
Dually, we construct $A\wedge B$ as the colimit of all the linear maps from a vector space $D_{j}$ to common targets $A$ and $B$.
This intuition is represented in Figure~\ref{intuition}.
Hence, $A\wedge B$ is the limit of the $\set{A,B}$-cone and $A\vee B$ is the colimit of the $\set{A,B}$-cocone. 
Recall that (co)complete categories are the ones where the (co)limit of any diagram $F:I\rightarrow D$ exists. 
The category of vector spaces is both complete and cocomplete.
Thus, we can generalize this to an arbitrary set of vector spaces $\set{A_{0},A_{1},\dots , A_{i}, \dots }$ in the sense of complete lattices (discussed later in Section \ref{The Lattice Proofs}).
The definitions for $\wedge$ and $\vee$ have a constructive nature that will show to be useful when we later describe the computation of the operations.
To resume, given a diagram of vector spaces and linear maps $\DD$, and arbitrary vector spaces $X$ and $Y$ in $\DD$ we call \dfn{meet} of spaces $X, Y$ to the limit in $\DD$ of all linear maps from $X\oplus Y$ to common targets of $X$ and $Y$, i.e., 
    \[
  X\meet Y = \lim\set{X\to Z\from Y : 
    Z \text{ common target of $X$ and $Y$}}
  \]
Dually, we call \dfn{join} of spaces $X, Y$ to the colimit in $\DD$ of all linear maps from common sources of $X$ and $Y$ to $X \oplus Y$, i.e., 
    \[
  X\join Y = \colim\set{X\from Z\to Y : 
    Z \text{ common source of $X$ and $Y$}}
  \] 
\end{remark}

\begin{remark}
Regarding the algorithmic implementation of equalizers and coequalizers, we refer to \cite{Skr13} where, given linear maps $f$ and $g$, the authors discuss the computation of $ker(f-g)$ and $coker(f-g)$ that correspond to the computation of pullbacks and push outs, respectively. As shown above, under the assumptions of this paper, these correspond to equalizers and coequalizers. 
Furthermore, when considering families of linear maps $\FF=(f_{i})_{i\in I}$ and $\GG=(g_{j})_{j\in I}$ of more than two maps, the equalizer of $\FF$ is $\bigcap_{i,j\in I} ker(f_{i}-f_{j})$ and the coequalizer of $\GG$ is $B/\bigcup_{i,j\in I} im(g_{i}-g_{j})$.
In fact, any such solution set of multiple equations can be seen as the solution set of one equation and thus we can reduce the computation to one kernel, Dual remarks hold for the computation of the coequalizer.
\end{remark}


\subsection{The Lattice Proofs}
\label{The Lattice Proofs}
In the following result we will show that the elements of a commutative diagram of
vector spaces together with the operations $\vee $ and $\wedge $ defined above determine a lattice.  
We will refer to it as the \emph{persistence lattice} of a given diagram of vector spaces and linear maps, 
i.e., the completion of that diagram into a lattice structure using the lattice operations $\vee $ and $\wedge $.
We  shall also show the stability of the lattice operations defined above, and show that these determine a complete lattice.


\begin{theorem}
Let $\DD$ be a diagram of spaces and maps between them. 
Consider the partially ordered set $\Pe=(\DD^*;\leq)$, with the operations $\vee $ and $\wedge $ defined as above, 
where $^*$ is the closure of $P$ relative to these operations.
Then $\Pe$ constitutes a lattice.
\end{theorem}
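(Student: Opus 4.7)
The plan is to verify the two things required of a lattice: that $\leq$ is a partial order on $\DD^*$, and that $\meet$, $\join$ as defined realize the greatest lower bound and least upper bound with respect to this order. The algebraic axioms L1--L4 then follow automatically from the inf/sup characterization, so the bulk of the work is showing that the equalizer- and coequalizer-based constructions do indeed deliver an infimum and a supremum in $(\DD^*, \leq)$.

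First I would check that $\leq$ extends from $\DD$ to $\DD^*$ as a partial order. Reflexivity holds because identity maps lie in every closure, transitivity holds because the closure is stable under composition (and all added diagrams remain commutative by construction), and antisymmetry follows from the isomorphism argument already articulated in the excerpt. Then I would establish that $A \meet B \leq A$ and $A \meet B \leq B$: by definition $A \meet B$ comes equipped with a monomorphism $A \meet B \hookrightarrow A\oplus B$, and post-composing with the canonical projections $p_A, p_B: A\oplus B \to A, B$ yields the required arrows. The dual observation using the inclusions $A, B \hookrightarrow A\oplus B$ followed by the quotient map $A \oplus B \twoheadrightarrow A\join B$ shows that $A, B \leq A\join B$.

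The core step is showing that $A\meet B$ is the \emph{greatest} lower bound. Given any $C$ with $C \leq A$ and $C \leq B$ via maps $f: C\to A$ and $g: C\to B$, the universal property of $A\oplus B$ as a product yields a unique linear map $\langle f,g\rangle: C \to A\oplus B$. I would then argue that this map factors through the equalizer kernel $A\meet B$: for every common target $X_k$ and every pair of maps $\alpha,\beta \in \Ff_k$ induced by arrows $A\to X_k$ and $B\to X_k$, commutativity of $\DD$ forces $\alpha\circ \langle f,g\rangle = \beta \circ \langle f,g\rangle$, because both sides compute the same arrow $C \to X_k$ through the commutative diagram. Universality of the equalizer then yields the unique $C \to A\meet B$, establishing $C \leq A\meet B$. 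The dual argument using a common upper bound $D$, the sum map $A\oplus B \to D$, and the universal property of the coequalizer shows $A\join B \leq D$ whenever $A, B \leq D$, so $A\join B$ is the least upper bound.

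The main obstacle I expect is handling the fact that $\DD^*$ is obtained by transfinite closure under $\meet$ and $\join$, so I need to guarantee that commutativity propagates through newly introduced elements and that the families $\Ff_k, \G_k$ stay well-defined at every stage of the closure. I would address this by induction on the closure: at each step, the new vertex $A\meet B$ (resp.\ $A\join B$) comes equipped only with the canonical maps produced by the universal property, and one checks that these commute with every pre-existing arrow. A secondary technical point is that the families $\Ff_k$ and $\G_k$ may be infinite, so the construction relies on the fact that the category of vector spaces is both complete and cocomplete, as noted in the accompanying remarks; this legitimizes passing to $\eq(\bigoplus_k \Ff_k)$ and $\coeq(\bigoplus_k \G_k)$ and collapses the verification of L1--L4 to the standard order-theoretic consequence of having all binary infima and suprema.
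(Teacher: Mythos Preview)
Your proposal is correct and follows essentially the same approach as the paper: both arguments hinge on the universal property of the (co)limit construction to show that any common lower bound $D$ of $A,B$ factors through $A\meet B$ (and dually for $\join$). You are more explicit than the paper about verifying that $A\meet B$ is actually a lower bound via the projections from $A\oplus B$, and about the transfinite closure and commutativity propagation issues, which the paper leaves implicit; but the core mechanism is identical.
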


\begin{proof}
Let us see that $A\wedge B$ is the biggest lower bound of the set $\set{A,B}$.
Due to Remark \ref{exist} we need only to see that given another vector space $D$ such that $D\leq A,B$, then there exists a linear map from $D$ to $A\wedge B$, i.e., $D \leq A\wedge B$. 
Let us consider the following diagram:

\begin{center}
\begin{tikzpicture}

\node (o) at (1,3) {$A\oplus B$};
\node (a) at (0,2) {$A$};
\node (b) at (2,2) {$B$};
\node (d) at (1,-1) {$D$};
\node (ab) at (1,0.5) {$A\wedge B$};

\draw[arrows=-latex'] (a) -- (o) node[pos=.5,right] {};
\draw[arrows=-latex'] (b) -- (o) node[pos=.5,right] {}; 
\draw[->, bend left] (d) to node {} (a) ;
\draw[->, bend right] (d) to node {} (b) ;
\draw[arrows=-latex', dashed] (ab) -- (a) node[pos=.5,right] {};
\draw[arrows=-latex', dashed] (ab) -- (b) node[pos=.5,right] {};
\draw[arrows=-latex', dashed,blue] (d) -- (ab) node[pos=.5,right] {};

\end{tikzpicture}  
\end{center}  

The compositions of either with the maps from $A$ and $B$ to some common target $C$ ($A\oplus B$, for instance) commute by assumption.
Due to the construction of $A\wedge B$ as a limit, we get that $D\leq A\wedge B$ by universality.
Hence, $A\wedge B$ is the greatest lower bound (the biggest subalgebra) regarding all the other subalgebras of $A\oplus B$ that are maps from $A\oplus B$ to the vector spaces above both $A$ and $B$.
The proof that $A\vee B$ is the least upper bound (the finest partition) of the set $\set{A,B}$ is analogous and derives from the universality of its construction as a colimit.
\end{proof}


\begin{theorem}\label{stabilize}
Given vector spaces $A$ and $B$, the construction of $A\wedge B$ and $A\vee B$ stabilizes.
\end{theorem}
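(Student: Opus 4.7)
The plan is to read ``stabilizes'' as: if we adjoin $A\wedge B$ and $A\vee B$ (together with their canonical (co)equalizer maps) to the diagram $\DD$ and re-run the meet/join constructions, we recover the same two vector spaces. Stabilization is then a claim that the recursive definition converges in one step, and I would prove it in two dual halves.

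First I would record the sidedness of the new spaces. The equalizer furnishes a map $A\wedge B\to A\oplus B$; composing with the two projections gives $A\wedge B\to A$ and $A\wedge B\to B$, so $A\wedge B$ is a new common \emph{source} of $\{A,B\}$, not a common target. Dually, $A\vee B$ is a new common \emph{target}. Since the meet construction only sees common targets and the join construction only sees common sources, the only potentially destabilizing addition for $A\wedge B$ is the new common target $A\vee B$, and symmetrically for $A\vee B$.

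The heart of the argument is the universal property. By construction of $A\vee B$ as the colimit of the $\{A,B\}$-cocone, every pre-existing common target $X_k$ admits a unique induced map $A\vee B\to X_k$ with $(A\oplus B\to A\vee B\to X_k)=(A\oplus B\to X_k)$. Consequently, any map $A\oplus B\to A\vee B$ appearing in the enlarged commutative diagram either is the canonical projection $\pi$ itself, or is a composite whose contribution to the equalizer was already recorded in some $\Ff_k$. In particular no new parallel pair $A\oplus B\rightrightarrows A\vee B$ gets added to the equalizer data, and so the equalizer, hence $A\wedge B$, is unchanged. The dual argument for $A\vee B$ uses that every common source $Y$ factors uniquely through $A\wedge B$, so the images cutting out the coequalizer are the same.

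The main obstacle will be making rigorous the claim that no genuinely new parallel map into $A\vee B$ appears under closure. I expect this to come down to the universal property of $A\vee B$ together with commutativity of the enlarged diagram: any candidate $A\oplus B\to A\vee B$ is determined by its restrictions to $A$ and $B$, and these must coincide with the maps fixed by the original diagram, collapsing all candidates to $\pi$. A short induction, leveraging the absorption and idempotency identities supplied by the previous theorem (so that objects such as $A\wedge(A\vee B)$ produce no new spaces), then rules out higher-order iterates and completes the proof.
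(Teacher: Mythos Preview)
Your framing is right, and you correctly isolate the only nontrivial check: the new common target $A\vee B$ might shrink the equalizer defining $A\wedge B$, and dually. But the universality step only delivers one of the two needed inclusions.

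For each target $X_k$ the equalizer cutting out $A\wedge B$ is taken over the parallel pair $(f_A,0),(0,f_B):A\oplus B\rightrightarrows X_k$ arising from the two structure maps $f_A:A\to X_k$ and $f_B:B\to X_k$; the resulting constraint is $f_A(a)=f_B(b)$. Adjoining $A\vee B$ therefore genuinely adds a new pair $(v,0),(0,w):A\oplus B\rightrightarrows A\vee B$, and these maps are distinct from $\pi=(v,w)$ and from each other, so your claim that ``no new parallel pair gets added'' is simply false. The universal arrow $A\vee B\to X_k$ that you invoke only shows that $v(a)=w(b)$ \emph{implies} $f_A(a)=f_B(b)$, i.e.\ $(A\wedge B)'\subseteq A\wedge B$. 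It gives no information in the reverse direction, which is exactly what stabilization requires.

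The ingredient your sketch is missing is that $A\vee B$ was manufactured \emph{from} $A\wedge B$: concretely $A\vee B$ is the quotient of $A\oplus B$ by the image of $A\wedge B$ under the difference of its two inclusion maps. The paper's proof uses this directly by element-chasing: if $(a,b)\in A\wedge B$ then $(a,-b)$ lies in the kernel of the quotient map, so $v(a)=w(b)$ in $A\vee B$ and hence $A\wedge B\subseteq(A\wedge B)'$; it then runs the dual argument starting from a common source. Your closing appeal to absorption and idempotency is not circular, but those identities live at the level of the abstract poset and do not by themselves establish that the \emph{concrete equalizer} computed over the enlarged target set is unchanged.
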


\begin{proof}
In the following proof we will show that the skew lattice construction stabilizes, i.e., whenever we are given vector spaces $A$ and $B$ and
\begin{itemize}
\item[(1)] we first construct $A\wedge B$ from $A,B\leq A\oplus B$, 
\item[(2)] then we construct $A\vee B$ from $A\wedge B\leq A,B$, 
\item[(3)] then we again construct $(A\wedge B)'$ from $A,B\leq A\vee B$,
\end{itemize}

\noindent we can ensure that $(A\wedge B)'=A\wedge B$.
The dual result follows analogously.

\textbf{Case 1: Sources.}
In this case, we assume that the elements are two sources and that there exists an element above both of them. We denote the elements $A$, $B$ and $C$, respectively. We are then able to define $M=A\meet B$ that is constituted by elements $(a,b)$ of $A\oplus B$ such that $(f,0)(a,b) = (g,0)(a,b)$, where $f$ and $g$ map to $C$. Since there is now an element below $A$ and $B$, we can define  $J=A\join B$ as all the quotient space of $A\oplus B$. Define $M\rto A\oplus B$  where the map is $(k,\ell)$. Therefore we now have $A\oplus B \rto A\oplus B /\langle (k(x), \ell(x)) \mid x\in M \rangle$. Call these maps $v$ and $w$. What remains to show is that the elements which satisfy  $(v,0)(a,b) = (0,w)(a,b)$ are the same as above. Now if $(f,0)(a,b) = (g,0)(a,b)\neq (0,0)$, by commutivity and universality,  $(v,0)(a,b) = (0,w)(a,b)\neq (0,0)$. However, if $(f,0)(a,b) = (g,0)(a,b) =  (0,0)$, then there exists an element $m \in M$ such that $m\mapsto (a,b)$ which implies that $(v,0)(a,b) = (0,w)(a,b)$, since this is precisely the relation in the definition.  Since $M$ can only get smaller with additional constraints, it follows that the resulting $M$ has stabilized.

\textbf{Case 2: Targets.}
In this case, we assume that the elements are two sources and that there exists an element below them. We denote the elements $A$, $B$ and $C$ respectively. We define  $J=A\join B$, constituted by the quotient  $A\oplus B / \langle (f(x), g(x))\mid x\in C \rangle$. Denote this map $(k,\ell)$. Based on this we define the $M=A\meet B$ as the subspace such that $(k,0)(a,b) =(0,\ell)(a,b)$. Denote the map from this space to the direct sum as $(v,w)$. Now we need to show $A\oplus B /\langle (f(c), g(c))\mid c\in C \rangle = A\oplus B /\langle (v(m), w(m))\mid m\in M \rangle$. By universality  it follows that there exists an $m \in M $ such that $c \mapsto m$ and hence $f(c) = v(m)$ and $g(c) = w(m)$. It follows that $f(c) \theta g(c)$ is equivalent to  $v(m) \theta w(m)$. 
If we do not want to use universality, if $(f,g)(c)\neq (0,0)$, there must be an element in $J$ such that $k((f(c)) = \ell(g(c))=j$. Hence we conclude that there is an element $c\mapsto m$. If $(f,g)(c) =  (x,0)$, then  by the quotient $k(f(c)) = 0 $ and again there must be an element $m \mapsto (x,0)$.  Finally if $(f,g)(c) =  (0,0)$, there is no element other than 0 such that $k(f(c)) = \ell(g(c))$ and hence $c \mapsto (0,0) \in M$. 

\end{proof}


\begin{theorem}\label{complete}
Persistence lattices are complete, i.e., both of the lattice operations extend to arbitrary
joins $\bigjoin_i D_i$ and meets $\bigmeet_i D_i$ (note that both
$\bigjoin_i D_i$ and $\bigmeet_i D_i$ might not be in $\DD$).
\end{theorem}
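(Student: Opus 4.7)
The plan is to repeat the binary construction essentially verbatim, with an arbitrary family $\set{D_i}_{i\in I}$ of vector spaces in $\DD$ replacing the pair $\set{A,B}$. First, I would form the direct sum (coproduct in $\mathrm{Vec}_\kk$) $\bigoplus_{i\in I} D_i$, and then set
\[
\bigmeet_{i\in I} D_i \ =\ \eq(\Ff), \qquad \bigjoin_{i\in I} D_i \ =\ \coeq(\G),
\]
where $\Ff$ is the family of all linear maps $\bigoplus_i D_i \rto Z$ with $Z$ a common target of all the $D_i$ in $\DD$, and $\G$ is the family of all linear maps $W\rto \bigoplus_i D_i$ with $W$ a common source of all the $D_i$ in $\DD$. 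The analogue of Remark \ref{exist} carries over word-for-word: by the universal property of the coproduct, any common target (resp.\ common source) of the $D_i$ is automatically a target (resp.\ source) of $\bigoplus_i D_i$, so both families are nonempty whenever there is any upper (resp.\ lower) bound for the $D_i$ in $\DD$.

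Next, I would show that $\bigmeet_i D_i$ is the greatest lower bound of $\set{D_i}$ in $\Pe$, by the same universality argument used in the binary case. Given any $E\in\Pe$ with $E\leq D_i$ for all $i$, the maps $E\rto D_i$ assemble into a map $E\rto \bigoplus_i D_i$ whose compositions with any two elements of $\Ff$ coincide by the assumed commutativity of $\DD$. Universality of the equalizer then produces a unique factorization $E\rto \bigmeet_i D_i$, giving $E\leq \bigmeet_i D_i$. The dual argument, using the universal property of the coequalizer, shows that $\bigjoin_i D_i$ is the least upper bound of $\set{D_i}$.

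Stability of the two operations under the alternation procedure is inherited directly from Theorem \ref{stabilize}, since the case analysis there does not depend on the cardinality of the family: the universality step works for any index set. The main obstacle I anticipate is set-theoretic hygiene: the families $\Ff$ and $\G$ could a priori be proper classes, and the infinite direct sum $\bigoplus_i D_i$ together with its (co)equalizer might look worrying. Both concerns are handled by passing to one representative per isomorphism class of vector space, and by the fact, already noted in the discussion preceding the definitions of $\meet$ and $\join$, that $\mathrm{Vec}_\kk$ is both complete and cocomplete; so all the required (co)limits exist and sit inside the closure $\DD^*$, which accounts for the parenthetical caveat that $\bigmeet_i D_i$ and $\bigjoin_i D_i$ need not themselves lie in $\DD$.
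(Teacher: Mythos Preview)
Your proposal is correct and follows essentially the same route as the paper: form the direct sum $\bigoplus_i D_i$ and take the (co)equalizer of all maps to common targets (from common sources) to produce $\bigmeet_i D_i$ and $\bigjoin_i D_i$. If anything, your write-up is more complete than the paper's own proof, which simply records the explicit kernel-set and quotient-set formulas without spelling out the universality argument for the GLB/LUB property or the set-theoretic caveats you mention.
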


\begin{proof}
Consider a subset $S$ of the underlying set of spaces of the given persistence lattice $\Pe$. 
Take their direct sum $X=\oplus_\ell \set{A_{\ell}\in S}$.
To see that the arbitrary set $S$ has a general meet just consider $\bigwedge S$ to be the limit of all the maps from all vector spaces $A_{\ell}\in S$ to a common vector space $\oplus_{k} C_{k}$ such that $A,B\leq C_{k}$, for each $k$, i.e., 
\[
\bigwedge S = \set{x\in X: f_{i}(x)=f_{j}(x) \text{,  for all  } f_{i},f_{j}\in \bigcup_{k}Hom(X,C_{k})}.
\]
\noindent This is the kernel set determined by the parcels of the direct sum $X$ that satisfy the system of equations determined by the considered maps, i.e.,
\[
\bigwedge_{\ell} A_{\ell}= \set{x\in \oplus_{\ell} A_{\ell}: f_{A_{i}A_{j}}(x)=f_{A_{u}A_{v}}(x)}.
\]
Dually, $\bigvee S$ is the colimit of the union of all maps from a common vector space $\oplus_{k} D_{k}$ all vector spaces $A_{i}\in S$ such that $D_{k}\leq A,B$, for each $k\in I$.
Hence, 
\[
\bigvee_{\ell} A_{\ell}=(\oplus_{\ell} A_{\ell})/\langle (f_{i}(x),f_{j}(x))\mid x\in \oplus_{k} D_{k}\rangle
\]
which is the quotient of the product of the vector spaces $A_{\ell}$ by the equivalence generated by the union of respective equivalences, i.e., 
\[
\bigvee_{\ell} A_{\ell}=(\oplus_{\ell} A_{\ell})/\langle \bigcup \theta_{A_{i}A_{j}}\rangle.
\]
\end{proof}

\begin{remark}
According to our definition of $\meet$ and $join$, 
\begin{itemize}
\item[(i)] the $\bigmeet$ of spaces $X_i$ is the limit in $\Pe$ of all linear maps from $\oplus_{i\in I} X$ to common targets of $X_i$, i.e., 
    \[
  \bigmeet_{i\in I} X_i = \lim\set{X_i\to Z :  Z \text{ common target of $X_i$}}
  \]
\item[(i)]  the $\bigjoin$ of spaces $X_i$ is the colimit in $\Pe$ of all linear maps from common sources of $X_i$ to $\oplus_{i\in I} X$ , i.e., 
    \[
  \bigjoin_{i\in I} X_i = \colim\set{X_i\from Z :  Z \text{ common source of $X_i$}}
  \]
\end{itemize} 
\end{remark}

\begin{remark}
Completeness is a very important property in the study of ordered structures.
The open sets of a topological space, ordered by inclusion, are examples of such structures where $\vee$ is given by the union of open sets and $\wedge$ by the interior of the intersection. 
In the last section we will see an algorithm application for this particular lattice property. 
We will refer to it as \emph{the largest injective} by then.  
\end{remark}


\subsection{The Lattice Properties}

In the following we describe some of the most relevant characteristics of the lattice that we have described in the earlier section.
We shall see that, besides the algebraic properties due to its lattice nature, it is also modular and distributive. 
%
%

\begin{remark}
Let us first have a look at the properties of the operations $\wedge$ and $\vee$ of the persistence lattice $\HH$ constructed above over an input poset. 
The identity map implies that $A\wedge A=A$ and $A\vee A=A$. 
This algebraic property follows from the order structure of the correspondent persistence lattice. 
The equivalence between the algebraic structure and the order structure of the underlying algebra ensures that a linear map $f:A\rightarrow B$ exists iff $A=A\wedge B$ iff $A\vee B=B$. 
Moreover, the following lattice identities hold:
\[
A\wedge (A\vee B)=A=A\vee (A\wedge B)=A.
\]


\end{remark}


The following result will enlighten this theory with a nice relation between the lattice operations and the direct sum. This property is not frequently used in the study of lattice properties but will permit us to show the distributivity of a persistence lattice in the next paragraphs.

\begin{theorem}\label{ses}
Let $A$ and $B$ be vector spaces. Then, \[
A\wedge B\rightarrow A\oplus B\rightarrow A\vee B \text{  is a short exact sequence.}
\]
\end{theorem}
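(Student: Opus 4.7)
The approach is to exhibit the claimed short exact sequence in the category of $\kk$-vector spaces, where the first map is the natural inclusion of the equalizer $A\wedge B\into A\oplus B$ and the second is the natural quotient $A\oplus B\onto A\vee B$ coming from the coequalizer (up to a sign convention, see below). The structural observation that drives everything is that, in the completed lattice, $A\vee B$ is itself a common target of $A$ and $B$ via its canonical coprojections $\iota_A,\iota_B$, and dually $A\wedge B$ is itself a common source of $A$ and $B$ via its projections $\pi_A,\pi_B$ onto the two factors.

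Using this, I would first show that the arbitrary limit and colimit in the definitions of $\meet$ and $\join$ collapse to an ordinary pullback and pushout: $A\wedge B$ is the pullback of the cospan $A\to A\vee B\from B$ formed by $\iota_A$ and $\iota_B$, and dually $A\vee B$ is the pushout of the span $A\from A\wedge B\to B$ formed by $\pi_A$ and $\pi_B$. For the pullback claim, one inclusion is immediate because $A\vee B$ is a common target; the reverse inclusion uses that any other common target $C$ has its structure maps $g_A,g_B$ factoring through $A\vee B$ (by the universal property of the coequalizer defining $A\vee B$), so the single equation $\iota_A(a)=\iota_B(b)$ already forces $g_A(a)=g_B(b)$ for every common target. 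The pushout claim follows by a dual argument using the universal property of $A\wedge B$ as an equalizer.

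With these reductions in hand, the short exact sequence is the standard one attached to a bicartesian square in an abelian category, namely $0\to A\wedge B\to A\oplus B\to A\vee B\to 0$ with the second map given by the difference $(a,b)\mapsto\iota_A(a)-\iota_B(b)$. Injectivity of the first map is immediate from the equalizer construction, and surjectivity of the second follows because $A\vee B$ is generated as a vector space by the images of $\iota_A$ and $\iota_B$. The composition vanishes because $(a,b)\in A\wedge B$ means exactly $\iota_A(a)=\iota_B(b)$ in $A\vee B$, and exactness at the middle is the pullback characterization itself, as $\ker(\iota_A-\iota_B)=\set{(a,b):\iota_A(a)=\iota_B(b)}=A\wedge B$.

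The main obstacle is the careful bookkeeping with signs: naively composing the inclusion with the standard quotient $(a,b)\mapsto\iota_A(a)+\iota_B(b)$ yields $2\iota_A(a)$ on $A\wedge B$ rather than zero, so one must use the difference $\iota_A-\iota_B$ (or, equivalently, twist the inclusion by negating the second coordinate). Beyond this convention, the argument is essentially formal, resting only on the universal properties of the (co)equalizer and on the commutativity of $\DD$.
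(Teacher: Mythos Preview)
Your proposal is correct and takes a somewhat more structured route than the paper. The paper's proof is a terse element chase: it notes that the equalizer map $f$ is injective and the coequalizer map $g$ is surjective, then argues that $\im f=\ker g$ by observing that $(a,b)\in\ker g$ exactly when the $A$- and $B$-components have the same image in $A\vee B$, which is the defining condition for membership in $A\wedge B$. You instead first collapse the arbitrary limit and colimit in the definitions of $\wedge$ and $\vee$ to a single pullback over $A\vee B$ and a single pushout under $A\wedge B$, using the universal properties (this reduction is essentially the content of the paper's separate stabilization result, Theorem~\ref{stabilize}, which you are reproving inline), and then read off the sequence as the standard short exact sequence attached to a bicartesian square in an abelian category. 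Both arguments hinge on the same observation, but yours makes explicit why the single target $A\vee B$ suffices to detect $A\wedge B$, a step the paper's proof takes for granted. Your attention to the sign convention---that one must use the difference $\iota_A-\iota_B$ (or equivalently twist the inclusion) for the composite to vanish---is a genuine point that the paper's proof glosses over entirely.
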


\begin{proof}
First observe that the limit map $f:A\wedge B\rightarrow A\oplus B$ is injective and the colimit map $g:A\oplus B\rightarrow A\vee B$ is surjective (cf. \cite{La98}).
We thus need to show that $\im f=\ker g$ to prove the isomorphism 
\[
A\vee B\cong A\oplus B/f(A\wedge B).
\]

If $y\in \im f$ then there exists $x\in A\wedge B$ mapping to $y$ such that $g_{i}(x)=g_{j}(x)$ for all $g_{k}:A\oplus B\rightarrow A\vee B$ and thus $y\in \ker g$.
On the other hand, if $x\in \ker g$, then $g_{|A}(x_{|A}) = g_{|B}(x_{|B})$ implying there exists an element in $x\in A\wedge B$ which maps to $y$.

\end{proof}


\begin{theorem}\label{distributive}
Persistence lattices are distributive.
\end{theorem}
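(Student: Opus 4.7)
The plan is to deduce distributivity from Theorem \ref{ses}, which gives the short exact sequence $A\wedge B\to A\oplus B\to A\vee B$ and hence the dimension identity $\dim(A\wedge B)+\dim(A\vee B)=\dim A+\dim B$. I will verify the primary form (d1), that is, $x\wedge(y\vee z)=(x\wedge y)\vee(x\wedge z)$, and use completeness together with the universal properties of meet and join to promote a dimension match into the desired equality in $\Pe$.

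First I would record the always-valid lattice inequality $(x\wedge y)\vee(x\wedge z)\leq x\wedge(y\vee z)$: from $x\wedge y\leq x$ and $x\wedge y\leq y\leq y\vee z$ one has $x\wedge y\leq x\wedge(y\vee z)$, symmetrically for $x\wedge z$, and then one takes joins. In the persistence lattice this $\leq$ is witnessed by an actual linear map $\varphi\colon(x\wedge y)\vee(x\wedge z)\to x\wedge(y\vee z)$ coming from the universal property of $\vee$, so it will suffice to show $\varphi$ is an isomorphism of vector spaces.

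Second I would iterate Theorem \ref{ses} to compute the dimensions of both sides. On the left, $\dim(x\wedge(y\vee z))=\dim x+\dim(y\vee z)-\dim(x\vee y\vee z)$, then expand $\dim(y\vee z)=\dim y+\dim z-\dim(y\wedge z)$; on the right, $\dim((x\wedge y)\vee(x\wedge z))=\dim(x\wedge y)+\dim(x\wedge z)-\dim(x\wedge y\wedge z)$, then expand each pairwise meet. Equating the two reduces to the three-variable inclusion--exclusion identity for dimensions in $\Pe$, which I would derive by applying Theorem \ref{ses} to the triple direct sum $x\oplus y\oplus z$ and tracking the kernel and cokernel of the natural maps $x\wedge y\wedge z\to x\oplus y\oplus z\to x\vee y\vee z$. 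Once the dimensions are shown to coincide, the injectivity of $\varphi$ (obtained by composing it with the embedding $x\wedge(y\vee z)\hookrightarrow x\oplus(y\vee z)$ from Theorem \ref{ses}) promotes $\varphi$ to an isomorphism, establishing (d1).

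The main obstacle is the three-variable inclusion--exclusion step: the lattice of subspaces of a fixed vector space is only modular and not distributive in general, so the identity cannot follow from Theorem \ref{ses} alone. It must exploit the fact that meets and joins in the persistence lattice live inside direct sums of spaces rather than inside a fixed ambient space, together with commutativity of the input diagram, both of which rigidify the bookkeeping enough to force the required identity. If this direct attack proves delicate, a natural alternative is to verify the symmetric equivalent form (d3), $(x\vee y)\wedge(x\vee z)\wedge(y\vee z)=(x\wedge y)\vee(x\wedge z)\vee(y\wedge z)$, by realizing both sides as canonical subquotients of $x\oplus y\oplus z$ via iterated applications of Theorem \ref{ses}; the symmetry of (d3) in $x,y,z$ is likely to simplify the diagram chase considerably.
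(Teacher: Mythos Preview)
Your route is genuinely different from the paper's, and the gap you yourself flag is real and, as stated, unfilled.

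The paper does not attempt to verify (d1) directly. Instead it uses the cancellation criterion for distributivity: a lattice is distributive iff $X\wedge A=X\wedge B$ and $X\vee A=X\vee B$ imply $A=B$. Assuming these two equalities, Theorem~\ref{ses} produces two short exact sequences
\[
0\to A\wedge X\to A\oplus X\to A\vee X\to 0,\qquad
0\to B\wedge X\to B\oplus X\to B\vee X\to 0,
\]
whose outer terms are isomorphic by hypothesis; the Five Lemma then gives $A\oplus X\cong B\oplus X$, hence $A\cong B$. The point is that this reduction keeps everything two-variable, so Theorem~\ref{ses} applies verbatim and no higher inclusion--exclusion is needed.

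Your plan, by contrast, hinges on a three-variable dimension identity. The obstacle you name is decisive: for a function $v$ on a lattice satisfying $v(a)+v(b)=v(a\wedge b)+v(a\vee b)$ (a modular valuation, which is exactly what Theorem~\ref{ses} gives for $\dim$), the three-term inclusion--exclusion formula is \emph{equivalent} to distributivity of the lattice. So deriving it ``by applying Theorem~\ref{ses} to $x\oplus y\oplus z$'' cannot succeed without an additional, genuinely distributive input; indeed the sequence $x\wedge y\wedge z\to x\oplus y\oplus z\to x\vee y\vee z$ is not exact in the middle (already for $x=y=z$ the kernel of the right map has twice the dimension of the image of the left one), so no dimension identity drops out of it. Your alternative via (d3) runs into the same circularity.

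There is a second, independent gap. You argue that $\varphi$ is injective ``by composing with the embedding $x\wedge(y\vee z)\hookrightarrow x\oplus(y\vee z)$'', but post-composition with a monomorphism does not make $\varphi$ injective unless you already know the composite is injective, and in $\Pe$ the relation $U\leq V$ only records the existence of a linear map, not its injectivity. So even if the dimensions matched, you would still owe an argument that the \emph{specific} structure map $\varphi$ is mono (or that a map exists in the reverse direction).

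Both issues evaporate under the paper's cancellation-plus-Five-Lemma approach, which I recommend you adopt.
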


\begin{proof}
Let $A$, $B$ and $X$ be vector spaces such that $X\vee A=X\vee B$ and $X\wedge A=X\wedge B$ in order to show that $A\cong B$. 
Consider the following commutative diagram of spaces:

\begin{center}
\begin{tikzpicture}[scale=.7]

  \node (c) at (0,2.5) {$ X\vee A=X\vee B$};
  \node (d) at (0,-2.5) {$X\wedge A=X\wedge B$};
  \node (a) at (-2.5,0) {$A$};
  \node (b) at (2.5,0) {$X$};
  \node (x) at (0,0) {$B$};
  
\draw[arrows=-latex'] (a) -- (c) node[pos=.5,left] {$f$};
\draw[arrows=-latex'] (d) -- (a) node[pos=.5,left] {$g$};
\draw[arrows=-latex'](x) -- (c) node[pos=.5,left] {$u$};
\draw[arrows=-latex'] (d) -- (b) node[pos=.5,right] {$t$};
\draw[arrows=-latex'](d) -- (x) node[pos=.5,left] {$v$};
\draw[arrows=-latex'](b) -- (c) node[pos=.5,right] {$s$};

\end{tikzpicture}
\end{center}

\noindent
The result will follow from the definition of distributivity for the lattice operations, the Five Lemma
and exactness of the sequence (cf. Theorem \ref{ses})
\begin{equation*}
0\rightarrow Y\wedge Z \xrightarrow{f} Y\oplus Z \xrightarrow{g} Y\vee Z \rightarrow 0 
\end{equation*}
Consider the the following diagram
\begin{center}
\begin{tikzcd}
0 \arrow{r} \arrow[leftrightarrow]{d}{\cong} & A\wedge X \arrow{r} \arrow[leftrightarrow]{d}{\cong} & A\oplus X \arrow{r} &A\vee X \arrow{r} \arrow[leftrightarrow]{d}{\cong}&0 \arrow[leftrightarrow]{d}{\cong} \\
0 \arrow{r} & B\wedge X \arrow{r} & B\oplus X \arrow{r} &B\vee X \arrow{r} &0 
\end{tikzcd}
\end{center}
The first and last isomorphism are trivial, while the other isomorphisms follow by assumption. 
The existence of the linear map $f: A\oplus B \rightarrow B\oplus X$ is ensured by the fact that we are dealing with vector spaces, assuming the commutativity of the diagram. 
Therefore, by the Five Lemma, we conclude that $A\oplus X \cong B\oplus X$ and hence $A\cong B$, concluding the proof.

\end{proof}

The distributive property is of great interest in the study of order structures. With it we are able to retrieve a rich structure satisfying many interesting identities. The next result follows directly from the distributivity of persistence lattices.

\begin{corollary}
The persistence lattice intervals $[A \wedge B, B]$ and $[A, A \vee B]$ are isomorphic due to the maps $f: [A\wedge B,B] \rightarrow [A,A\vee B]$, defined by $X\mapsto X\vee A$, and $g:  [A, A\vee B] \rightarrow [A\wedge B,B]$, defined by $Y\mapsto Y\wedge B$.
\end{corollary}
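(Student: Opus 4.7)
The plan is to verify this is just the classical diamond/transposition isomorphism between the intervals $[A\wedge B, B]$ and $[A, A\vee B]$, whose existence rests on distributivity established in Theorem~\ref{distributive}. I would proceed in three short stages: well-definedness of $f$ and $g$, monotonicity, and verification that $f\circ g$ and $g\circ f$ are identities.

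First I would check that the maps land in the advertised intervals. For $X\in[A\wedge B, B]$, monotonicity of $\vee$ gives $A = A\vee A \leq A\vee X \leq A\vee B$, so $f(X)=X\vee A$ lies in $[A, A\vee B]$. Dually, for $Y\in[A, A\vee B]$, we get $A\wedge B \leq Y\wedge B \leq (A\vee B)\wedge B = B$ (using absorption), so $g(Y)=Y\wedge B$ lies in $[A\wedge B, B]$. Monotonicity of $f$ and $g$ is immediate from the monotonicity of the lattice operations.

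The substantive step is verifying $g\circ f = \mathrm{id}$ and $f\circ g = \mathrm{id}$. For $X\in[A\wedge B, B]$, I compute
\[
g(f(X)) \;=\; (X\vee A)\wedge B \;=\; (X\wedge B)\vee (A\wedge B) \;=\; X\vee (A\wedge B) \;=\; X,
\]
where the second equality uses distributivity (Theorem~\ref{distributive}), the third uses $X\leq B$ together with absorption, and the last uses $A\wedge B\leq X$. Dually, for $Y\in[A, A\vee B]$,
\[
f(g(Y)) \;=\; (Y\wedge B)\vee A \;=\; (Y\vee A)\wedge (B\vee A) \;=\; Y\wedge (A\vee B) \;=\; Y,
\]
by distributivity, $A\leq Y$, and $Y\leq A\vee B$. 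Since both compositions are identities and both maps are monotone, $f$ and $g$ are mutually inverse order-isomorphisms.

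There is no real obstacle here once distributivity is in hand; the only mild subtlety is making sure the two uses of distributivity in the computations above are the right ones. In particular, the identity $(X\vee A)\wedge B = (X\wedge B)\vee (A\wedge B)$ is the form (d1) of distributivity from the Preliminaries, and $(Y\wedge B)\vee A = (Y\vee A)\wedge (B\vee A)$ is the dual form (d2); both are available because Theorem~\ref{distributive} gives full distributivity. Thus the corollary is a direct consequence of that theorem together with the definition of $f$ and $g$.
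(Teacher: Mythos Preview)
Your argument is correct and is exactly the standard diamond/transposition argument the paper has in mind: the paper gives no explicit proof, merely stating that the corollary ``follows directly from the distributivity of persistence lattices'' (and in the appendix points to the diamond isomorphism theorem), so your detailed verification via (d1) and (d2) from Theorem~\ref{distributive} is precisely the intended route. One tiny terminological quibble: when you simplify $X\wedge B$ to $X$ you are using $X\leq B$ (i.e., the order characterization $X\leq B\iff X\wedge B=X$), not absorption; the logic is unaffected.
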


\begin{remark}
Due to Dilworth's results on poset decompositions, there exists an antitotal order of vector spaces $S$ and a partition of the order in $A$ into a family $F$ of total orders of vector spaces such that the number of total orders in the partition equals the cardinality of $S$ and, thus, $S$ is the largest antitotal order in the order, and $F$ must be the smallest family of total orders into which the order can be partitioned. 
Dually, the size of the largest total order of vector spaces in a finite poset of vector spaces as such equals the smallest number of antitotal orders of vector spaces into which the order may be partitioned.
\end{remark}


\begin{theorem}\label{discrete}
Persistence lattices are discrete, finite and bounded.
\end{theorem}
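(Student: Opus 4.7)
My plan is to derive all three properties from a single key observation: the persistence lattice $\Pe$ is a distributive lattice (Theorem \ref{distributive}) generated under $\meet$ and $\join$ by the finite set $\DD$. From this, finiteness is the substantive step, and the other two properties will follow immediately.

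For finiteness, I would argue as follows. By construction, $\Pe = \DD^*$ is the closure of $\DD$ under $\meet$ and $\join$, so every element of $\Pe$ is expressible as a lattice polynomial in the generators $A_1, \dots, A_n \in \DD$. Since $\Pe$ is distributive, any such polynomial can be rewritten in disjunctive normal form, namely as a join of meets of generators. There are only $2^n$ subsets of $\DD$ and hence at most $2^{2^n}$ distinct joins of meets of those subsets, so $|\Pe|$ is bounded above by the $n$-th Dedekind number. This yields $|\Pe| < \infty$. Theorem \ref{stabilize} complements this picture by guaranteeing that the stepwise alternation of $\meet$ and $\join$ between two fixed elements does not produce infinite cascades of new spaces.

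For boundedness, completeness (Theorem \ref{complete}) allows us to form $0_\Pe := \bigmeet_{A \in \DD} A$ and $1_\Pe := \bigjoin_{A \in \DD} A$, and these are finite meets and joins because $\DD$ itself is finite. Since every element of $\Pe$ is a lattice polynomial in the generators, it lies between these extremes. Hence $\Pe$ has both a least and a greatest element. Discreteness is then an immediate consequence of finiteness: in a finite poset there are no infinite ascending or descending chains and every interval is finite, so every non-minimal element has an immediate predecessor and every non-maximal element an immediate successor.

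The main obstacle is making the disjunctive normal form argument fully rigorous in the presence of the relations imposed by the linear maps of $\DD$, since $\Pe$ is a distributive lattice presented by generators and relations rather than the free distributive lattice on $n$ generators. The resolution is that such relations can only collapse equivalence classes of formal polynomials, never introduce new elements; consequently, the Dedekind bound for the free distributive lattice on $|\DD|$ generators remains a valid upper bound for $|\Pe|$, securing finiteness and therefore all three properties.
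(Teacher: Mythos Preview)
Your argument is correct and, in fact, more transparent than the paper's. Both proofs establish finiteness first and then read off boundedness and discreteness as immediate consequences. The difference lies in how finiteness is obtained. You invoke distributivity (Theorem~\ref{distributive}, already established) to put every element into disjunctive normal form, yielding the standard Dedekind-number bound on a distributive lattice generated by $n$ elements. The paper instead counts \emph{linear strings} of generators with a binary choice of operation between consecutive symbols, using only commutativity and associativity, and arrives at the much tighter bound $\sum_i \binom{n}{i}2^{i-1}\le 2^{2n}$.

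The trade-off is this: your bound is doubly exponential rather than singly exponential, but your argument is the standard, fully rigorous one for finitely generated distributive lattices. The paper's sharper bound is appealing, but its justification---that commutativity and associativity alone suffice to reduce any lattice polynomial to a linear string---is left implicit; without distributivity one cannot, for instance, collapse $(a\meet b)\join(c\meet d)$ to such a string, so the paper is tacitly relying on the same normal-form reduction you make explicit. Your appeal to Theorem~\ref{complete} for boundedness is harmless but unnecessary: once $\Pe$ is finite, $\bigmeet\DD$ and $\bigjoin\DD$ are ordinary finite lattice operations and give the bounds directly.
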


\begin{proof}
In the following we will give an upper bound for the number of elements of a persistence lattice of a given diagram of spaces.
The finiteness of the lattice implies that it is discrete and complete. Thus, it follows that it is a bounded lattice. 
Indeed, an upper bound for the number of elements of the persistence lattice correspondent to a diagram with $\mid V\mid =n$ is given by 
\[
\sum_{i} \binom{n}{i} 2^{i-1}\leq 2^{n}.2^{n}=2^{2n}.
\]
To see the above bound consider a string of $V_i$'s. Since the
operations are commutative and associative, we will need to only
consider all combinations of nodes which are included in the string.
To get an element of the lattice, we must also consider the two operations. For a string of length of $m$, this implies $m-1$ operations. Since we have two operations this implies there are $2^{(m-1)}$ operations on the string. Since $m<n$, we can bound the sum by $2^{2n}$, implying that we add a  finite number of elements. 
\end{proof}

\begin{remark}
This is a very loose bound intended only to illustrate finiteness. In practice, there will be far fewer elements due to distributivity and even fewer elements of interest.  
\end{remark}



\begin{theorem}
Persistence lattices constitute complete Heyting algebras.
\end{theorem}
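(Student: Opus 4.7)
The plan is to reduce the claim to the combination of three facts already established in this paper. Recall from the Preliminaries that a bounded lattice $L$ is a Heyting algebra provided that for every pair $a,b \in L$ there exists a greatest element $a \Rightarrow b$ satisfying $a \wedge (a \Rightarrow b) \leq b$, and that every complete distributive lattice is automatically a Heyting algebra with the explicit formula $a \Rightarrow b = \bigvee \set{x \in L \mid x \wedge a \leq b}$. So the strategy is to invoke Theorem~\ref{complete} (completeness), Theorem~\ref{distributive} (distributivity), and Theorem~\ref{discrete} (boundedness) and then plug the explicit formula into the definition.

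First, I would note that boundedness is immediate once we have completeness: the top element is $\bigjoin_{X\in \DD^*} X$ and the bottom element is $\bigmeet_{X\in \DD^*} X$, both of which exist in $\Pe$ by Theorem~\ref{complete}. Next, given any pair $A,B$ of vector spaces in $\Pe$, I would define
\[
A \Rightarrow B := \bigjoin \set{X \in \Pe \mid X \wedge A \leq B},
\]
which is a well-defined element of $\Pe$ since the join is taken over a subset of a complete lattice.

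The two verifications I would then carry out are the standard ones. It is immediate from the definition that if $C \wedge A \leq B$ then $C$ is one of the elements being joined, so $C \leq A \Rightarrow B$; this gives maximality. The non-trivial direction is showing that $(A \Rightarrow B) \wedge A \leq B$ itself, which demands that meet distribute over the (possibly large) join defining $A \Rightarrow B$. This is precisely where distributivity is used in an essential way.

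The main obstacle is thus the infinite distributive law $A \wedge \bigjoin_i X_i = \bigjoin_i (A \wedge X_i)$, since Theorem~\ref{distributive} states only the binary form. There are two ways I would address this. The clean way is to appeal to Theorem~\ref{discrete}: the persistence lattice is finite, so every join in the formula for $A \Rightarrow B$ is a finite join, and the infinite distributive law reduces to iterated applications of the binary distributive law. Alternatively, one can argue intrinsically from the construction of $\bigjoin$ in Theorem~\ref{complete} as a colimit together with the short exact sequence of Theorem~\ref{ses}, verifying directly that the colimit construction commutes with the meet construction. Either way, once the distributive law is in hand, the universal property of $A \Rightarrow B$ follows and $\Pe$ is a complete Heyting algebra.
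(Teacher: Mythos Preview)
Your argument is correct and follows the same route as the paper: invoke Theorems~\ref{complete}, \ref{distributive}, and \ref{discrete}, then use finiteness to reduce the required join-infinite distributive law to the binary one. The paper's own proof is a one-line citation of these three theorems, so your version simply spells out in detail what the paper leaves implicit.
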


\begin{proof}
Recall that nonempty finite distributive lattices are bounded and complete, thus forming Heyting algebras. 
Hence, this result follows from Theorems \ref{distributive}, \ref{complete} and \ref{discrete}.
\end{proof}

\begin{remark}
Whenever $A$ and $B$ are vector spaces in a diagram, there exists a vector space $X$ that is maximal in the sense of $X\wedge A\leq B$, i.e., the implication operation is given by the colimit 
\[
A \Rightarrow B=\bigvee \set{X_i\in L \mid \bigoplus_i (X_i\wedge A) \rightarrow B}.
\]
Observe that the case of standard persistence we have that 
\[
A\Rightarrow B = \begin{cases} B, & \mbox{if } B\leq A\\ 1, & \mbox{if } A\leq B \end{cases}.
\]
The study of the interpretation of the implication operation in the framework of other general models of persistence, as zig-zag or multidimensional persistence, is a matter of further research.
\end{remark}

\begin{remark}
Persistence lattices $\Pe$ are not Boolean algebras. 
To see this just consider the standard persistence case that is represented by a total order, or the total order $\set{C,B,D}$ in the above bifiltration and observe that there is no $X\in L$ such that $B\wedge X=D$ and $B\vee X=C$. Hence, $B$ also doesn't have a complement in $\Pe$.
\end{remark}

\begin{remark}
The results of this section permit us to discuss several directions of future work that can contribute with further information on the order and algebraic properties of this structure and motivate the construction of new algorithms. 
A \emph{topos} is essentially a category that ''behaves" like a category of sheaves of sets on a topological space, while \emph{sheaves} of sets are functors designed to track locally defined data attached to the open sets of a topological space and transpose it to a global perspective using a certain ''gluing property". 
Topos theory has important applications in algebraic geometry and logic (cf. \cite{Lang} and \cite{Joh86}), and has recently been used to construct the foundations of quantum theory (cf. \cite{Is08}).
The category of sheaves on a Heyting algebra is a topos (cf. \cite{CatCS}).
Whenever skew lattices, a noncommutative variation of lattices, satisfy a certain distributivity, they constitute sheaves over distributive lattices (and over Heyting algebras in particular (cf. \cite{Ba13}). 
The study of such algebras, developed by the second author of this paper in \cite{Le13}, might be of great interest to the research on the properties of persistence lattices and their interpretation in the framework of persistent homology.
Furthermore, complete Heyting algebras are of great importance to study of frames and locales that form the foundation of pointless topology, leading to the categorification of some ideas of general topology (cf. \cite{Joh86}).
\end{remark}

\begin{remark}
A natural and well studied relationship between lattice theory and topology is described by the duality theory \cite{Da02}.  
These dualities are of great interest to the study of algebraic and topological problems taking advantage of the categorical equivalence between respective structures (cf. \cite{Fa96}).
In the case of complete Heyting algebras, the Esakia duality permits the correspondence of such algebras to dual spaces, called \emph{Esakia spaces} that are compact topological spaces equipped with a partial order, satisfying a certain separation property that will imply them to be Hausdorff and zero dimensional (cf. \cite{Be06}).  
These spaces are a particular case of \emph{Priestley spaces} that are homeomorphic to the spectrum of a ring (cf.\cite{Be10}). 
We are interested in the study of such topological spaces and correspondent ring. 
\end{remark}


\section{Algorithms and Applications}
\label{Algorithms and Applications}

We now give some interpretations of both the order structure and the algebraic structure of the lattice in the framework of persistent homology. 


\subsection{Interpretations Under Persistence}

We saw that in the case of standard persistence, we have a total order where $A$ and $B$ are related and thus $(L1)$ tells us that, $\X_{m}\wedge \X_{n}=\X_{m}$, the domain of the map $f$ connecting $\X_{m}$ and $\X_{n}$, while $\X_{m}\vee \X_{n}=\X_{n}$, its codomain.
On the other hand, to analyze the multidimensional case we saw that using  
\[
\X_{n,m}\wedge \X_{p,q}=\X_{\min\set{n,p},\max\set{m,q}} \text{  andÊ } \X_{n,m}\vee \X_{p,q}=\X_{\max\set{n,p},\min\set{m,q}}.
\] 
for the meet and join respectively we recover the rank invariant. 
We will return to the bifiltration case but first discuss its connections with zig-zag persistence. 
In the case of zig-zag persistence, we get the following diagram:

\begin{center}
\begin{tikzpicture}
\node (a) at (0,0 ) {$\Hg(\X_0)$};
\node (b) at (2,0 ) {$\Hg(\X_2)$};
\node (c) at (4,0 ) {$\Hg(\X_4)$};
\node (d) at (6,0 ) {$\Hg(\X_6)$};
\node (e) at (8,0 ) {$\Hg(\X_8)$};
\node (w) at (1,1 ) {$\Hg(\X_1)$};
\node (x) at (3,1 ) {$\Hg(\X_3)$};
\node (y) at (5,1 ) {$\Hg(\X_5)$};
\node (z) at (7,1 ) {$\Hg(\X_7)$};
\draw[->] (a) -- (w);
\draw[->] (b) -- (w);
\draw[->] (b) -- (x);
\draw[->] (c) -- (x);
\draw[->] (c) -- (y);
\draw[->] (d) -- (y);
\draw[->] (d) -- (z);
\draw[->] (e) -- (z);
\end{tikzpicture}
\end{center}
Without loss of generality, if we assume that we have an alternating zig-zag as above, we see that we have a partial order: the odds are strictly greater than the even indexed spaces.  This is not an interesting partial order as most elements are incomparable. 
In ~\cite{Car09} and~\cite{Be12} it was noted that using unions and relative homology, the above could be extended to a case where all elements become comparable with possible dimension shifts. 
The resulting zig-zag can be extended into a M\" obius strip through exact squares. By exactness any two elements can be compared by considering unions and relative homologies as shown in Figure~\ref{fig:zz}. 

\begin{figure}[ht]
\begin{center}
\includegraphics[height=5cm]{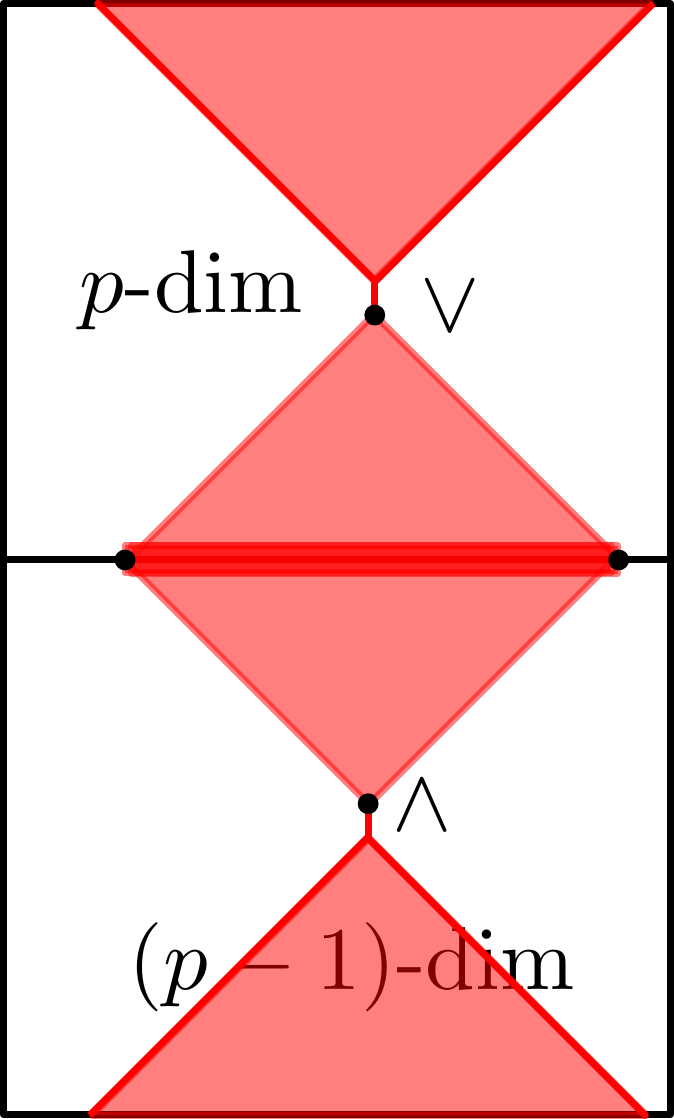}
\caption{\label{fig:zz}  Here we show a possible choice of meet and
  join for  zig-zag persistence based on the M\" obius strip
  construction of~\cite{Car09}.}
\end{center}
\end{figure}

Using a special case of our construction, using pullbacks and pushouts as limits and colimits, the authors in~\cite{Sk12}, developed a parallelized algorithm for computing zig-zag persistence.  

To compare two general elements define
\begin{equation*}
\Hg_*(\X_i)\meet\Hg_*(\X_j) = \begin{cases} K \rto \Hg_*(\X_i)\oplus \Hg_*(\X_j)\rightrightarrows \Hg_*(\X_{i+1}) \qquad  j=i+2\\
\Hg_*(\X_i)\meet\Hg_*(\X_{i+2})\meet\cdots\meet\Hg_*(\X_j) \
\end{cases}
\end{equation*}
and
\begin{equation*}
\Hg_*(\X_i)\join\Hg_*(\X_j) = \begin{cases} \Hg_*(\X_{i+1}) \rightrightarrows  \Hg_*(\X_i)\oplus \Hg_*(\X_j) \rto P \qquad  j=i+2\\
\Hg_*(\X_i)\join\Hg_*(\X_{i+2})\join\cdots\join\Hg_*(\X_j) \
\end{cases}
\end{equation*}
With this definition it is not difficult to verify the following results
\begin{enumerate}
\item The rank of $\Hg_*(\X_i)\meet\Hg(\X_j)  \rightarrow
  \Hg_*(\X_i)\join\Hg(\X_j)$ is equal to the  rank in the original zig-zag
  definition.
\item The structure can be built up iteratively, comparing all
  elements two steps away then three steps away and
  so on, leading to the parallelized algorithm. 
\end{enumerate}
\begin{remark}
In \cite{Sk12}, an additional trick was used so that only the meets had to
be computed. 
\end{remark}


\subsection{Largest Injective}

%
%
%
%
%
%

For the first application, we consider the computation of the largest injective of a diagram. 
In principle, we are looking for something which persists over an entire diagram. 
While satisfying the properties of the underlying lattice structure, the largest injective must fulfill to be in the following images 
\begin{equation*}
\im\left(\Hg_*(\X_i) \meet \Hg_*(\X_j) \rto \Hg_*(\X_i) \join
\Hg_*(\X_j)\right) \qquad \forall i,j
\end{equation*}
By completeness, it follows that this can be written as 
\begin{equation*}
\im \left(\bigwedge\limits_i \Hg_*(\X_j) \rto \bigvee\limits_i
\Hg_*(\X_i)\right) 
\end{equation*}
Using the order structure, we
can rewrite the above as
\begin{equation*}
\im \left(\bigwedge\limits_{i\in\mathrm{sources}} \Hg_*(\X_j) \rto \bigvee\limits_{j\in\mathrm{targets}}
\Hg_*(\X_j)\right). 
\end{equation*}
Recall that \emph{sources} are all the elements in original diagram which are not the codomain of any maps and \emph{targets} are the elements which are not the domain of any maps. 
Assuming we have $n$ sources, $m$ targets and the longest total order in the diagram is $k$ assuming an $O(1)$ time to compute a $\join$ or $\meet$ of two elements, we have a run time of $O(n+m+k)$. 
On a parallel machine, the operations can be computed independently and using associativity, we can construct the total meet/join using a binary tree scheme, giving a run time of $O(k+\log (\max(n,m)) )$.

\begin{center}

     \begin{tikzpicture}[scale=.8] 

       \node (x1) at (1,1) {$\X_{1}$};
        \node (x2) at (3,1) {$\X_{2}$};
        \node (x3) at (5,1) {$\X_{3}$};
        
       \node (x4) at (1,-2) {$\X_{4}$};
        \node (x5) at (3,-2) {$\X_{5}$};
        \node (x6) at (5,-2) {$\X_{6}$};
        
        \draw[fill=black] (0,0) circle (0.05cm);
        \draw[fill=black] (2,0) circle (0.05cm);
        \draw[fill=black] (4,0) circle (0.05cm);
        \draw[fill=black] (6,0) circle (0.05cm);

        \draw[fill=black] (0,-1) circle (0.05cm);
        \draw[fill=black] (2,-1) circle (0.05cm);
        \draw[fill=black] (4,-1) circle (0.05cm);
        \draw[fill=black] (6,-1) circle (0.05cm);
        
        \draw[->] (0,0) edge (x1);
        \draw[->] (2,0) edge (x1);
        \draw[->] (2,0) edge (x2);
        \draw[->] (4,0) edge (x2);
        \draw[->] (4,0) edge (x3);
        \draw[->] (6,0) edge (x3);
        
        \draw[->] (x4) edge (0,-1);
        \draw[->] (x4) edge (2,-1);
        \draw[->] (x5) edge (2,-1);
        \draw[->] (x5) edge (4,-1);
        \draw[->] (x6) edge (4,-1);
        \draw[->] (x6) edge (6,-1);
        
        \node (x7) at (2,-3) {$\X_{7}$};
        \node (x8) at (4,-3) {$\X_{8}$};
        \node (x9) at (3,-4) {$\X_{9}$};

        \draw[->,blue] (x7) edge (x4);
        \draw[->,blue] (x7) edge (x5);
        \draw[->,blue] (x8) edge (x5);
        \draw[->,blue] (x8) edge (x6);
        \draw[->,blue] (x9) edge (x7);
        \draw[->,blue] (x9) edge (x8);       
        
        \node (x10) at (3,2) {$\X_{10}$};

        \draw[->, dashed, blue] (x1) edge (x10);
        \draw[->, dashed, blue] (x2) edge (x10);
        \draw[->, dashed, blue] (x3) edge (x10);

     \end{tikzpicture}

\begin{equation*}
\im \left(\bigwedge\limits_{i\in\mathrm{sources}} \Hg_*(\X_j) \rto \bigvee\limits_{j\in\mathrm{targets}}\Hg_*(\X_j)\right) 
\end{equation*}

\end{center}

Unfortunately, we cannot always compute the meet or join in constant time as we may need to compose a linear number of maps. In the future, we will do a more fine grain analysis, but we note that given that we have a
distributive lattice, all maximal total orders are of constant length, allowing us to bound the time to compute any meet and join by this length.


\subsection{Stability of the Lattice}

Here we look at a possible description of stability relating to a persistence lattice. 
The general idea is to show that if some local conditions hold, we can infer the existence of some persistent classes. 

\begin{lemma}\label{break}
Let $A$, $B$, $C$ and $D$ be vector spaces such that $A\wedge B\leq C$ and $D\leq A\vee B$. 
Then, $A\vee B \leq C$ and $D\leq A\wedge B$. 
\end{lemma}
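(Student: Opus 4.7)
My plan is to build on the short exact sequence from Theorem \ref{ses} together with distributivity from Theorem \ref{distributive}. First, to establish $A\vee B\leq C$, I would begin by noting that the hypothesis $A\wedge B\leq C$ produces an actual linear map $\phi:A\wedge B\to C$ in the persistence lattice. I would then try to upgrade $\phi$ to a linear map $A\oplus B\to C$ which vanishes on the image of the inclusion $A\wedge B\hookrightarrow A\oplus B$; by the exactness of the sequence $A\wedge B\to A\oplus B\to A\vee B$, this is exactly the condition for the map to descend through the quotient to a map $A\vee B\to C$, which is what I want.

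The key technical step is to construct that map $A\oplus B\to C$ in the first place, since $\phi$ only provides a map out of the subobject $A\wedge B$. To extend it, I would exploit the dual characterization of the join: $A\vee B$ is the colimit over common sources of $A$ and $B$, and $A\wedge B$ is itself such a common source (via its canonical maps into $A$ and into $B$). Distributivity of the persistence lattice, combined with the commutativity assumption on the underlying diagram, should force $\phi$ to extend compatibly along the two projections $A\wedge B\to A$ and $A\wedge B\to B$ to produce maps $A\to C$ and $B\to C$ whose restrictions to $A\wedge B$ coincide. By the universal property of $A\oplus B$ as a coproduct these assemble into a single map $A\oplus B\to C$, and the short exact sequence then lets it factor through $A\vee B$.

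The second conclusion $D\leq A\wedge B$ is obtained by the dual construction. Starting from $\psi:D\to A\vee B$, I would lift $\psi$ through the surjection $A\oplus B\twoheadrightarrow A\vee B$ of Theorem \ref{ses} and then appeal to the universal property of $A\wedge B$ as an equalizer: if the lifted map $\tilde\psi:D\to A\oplus B$ equalizes every parallel pair going to a common target of $A$ and $B$, then $\tilde\psi$ factors through $A\wedge B$. Distributivity is once again the ingredient that guarantees such a lift exists and satisfies the required equalizing conditions.

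The hardest step I anticipate is the lifting/extension step at the heart of both halves. The lemma is essentially a rigidity statement, asserting that the meet being small enough to land in $C$ is already enough to force the join to land in $C$, and symmetrically for $D$. Such a statement is certainly not true for an arbitrary distributive lattice, so pinning down precisely where distributivity of the persistence lattice combines with the exactness of $A\wedge B\to A\oplus B\to A\vee B$ to produce the required factorizations will be the central task. In particular, I would expect the argument to hinge on showing that the maps into $C$ (resp.\ out of $D$) arising from the hypothesis are compatible with the cocone (resp.\ cone) structure used to define $A\vee B$ (resp.\ $A\wedge B$).
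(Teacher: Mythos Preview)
Your plan invokes considerably heavier machinery than the paper uses. The paper's argument bypasses both Theorem~\ref{ses} and Theorem~\ref{distributive} entirely: from the hypothesis $A\wedge B\leq C$ one has a map $A\wedge B\to C$; since $A\wedge B$ is a subspace of $A\oplus B$, one obtains maps $f_A:A\to C$ and $f_B:B\to C$ directly, so $A,B\leq C$; then the universal property of $A\vee B$ as a coequalizer yields $A\vee B\to C$. The dual half uses only the universal property of $A\wedge B$ as an equalizer. There is no appeal to distributivity or to the short exact sequence.

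Your proposal also contains an internal inconsistency. In the first paragraph you ask for a map $A\oplus B\to C$ that simultaneously ``upgrades $\phi$'' and vanishes on the image of $A\wedge B\hookrightarrow A\oplus B$; if it restricts to $\phi$ on that image it cannot vanish there unless $\phi=0$. Your second paragraph is closer to a workable line (extend $\phi$ along $A\wedge B\to A$ and $A\wedge B\to B$ separately, then assemble), and in fact this is essentially what the paper does --- but you credit the existence of those extensions to distributivity, which is a red herring. The extensions exist because we are in vector spaces, where every subspace is a summand; distributivity is a purely order-theoretic statement and cannot by itself manufacture linear maps. Your own observation that the lemma fails in an arbitrary distributive lattice should have steered you away from distributivity and toward the concrete limit/colimit definitions of $\wedge$ and $\vee$ as the operative ingredient.
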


\begin{proof}
Assume the existence of a linear map $f:A\wedge B\rightarrow C$
As $A\wedge B$ is a subalgebra of $A\oplus B$ then it is possible to construct linear maps $f_{A}:A\rightarrow C$ and $f_{B}:B\rightarrow C$ implying that $A,B\leq C$. 
Thus, the universality of $A\vee B$ constructed as a coequalizer implies the existence of a unique linear map $h:A\vee B\rightarrow C$, i.e., $A\vee B\leq C$.

\begin{center}
\begin{tikzpicture}[scale=.5]

  \node (c) at (0,2) {$A\vee B$};
  \node (d) at (0,-2) {$A\wedge B$};
  \node (a) at (-2,0) {$A$};
  \node (b) at (2,0) {$B$};
  \node (v) at (0,4) {$C$};

\draw[arrows=-latex',dashed, blue] (d) -- (c) node[pos=.5,above] {};
\draw[arrows=-latex',dashed, blue] (c) -- (v) node[pos=.5,above] {};
  
\draw[arrows=-latex'] (a) -- (c) node[pos=.5,above] {};
\draw[arrows=-latex'](b) -- (c) node[pos=.5,left] {};
\draw[arrows=-latex'] (d) -- (a) node[pos=.5,above] {};
\draw[arrows=-latex'](d) -- (b) node[pos=.5,right] {};

\end{tikzpicture}
\end{center}

Dually, the existence of a linear map $g:D\rightarrow A\vee B$ implies that $D\leq A,B$ so that the universality of $A\wedge B$ as an equalizer implies the existence of a linear map $k:D\rightarrow A\wedge B$, i.e., $D\leq A\wedge B$.
\end{proof}

We can now state the following theorem:
\begin{theorem}
Let $A$, $B$, $C$ and $D$ be vector spaces such that $A\leq B$ and $C\leq D$.
Then, $A\vee C\leq B\wedge D$. 
\end{theorem}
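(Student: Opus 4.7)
The plan is to invoke Lemma \ref{break}. Its first implication asserts that whenever $X\wedge Y\leq Z$ one also has $X\vee Y\leq Z$, so instantiating $X=A$, $Y=C$, $Z=B\wedge D$ reduces the target inequality $A\vee C\leq B\wedge D$ to the strictly weaker claim $A\wedge C\leq B\wedge D$. The entire proof therefore boils down to checking this single inequality and then citing the lemma.

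To obtain $A\wedge C\leq B\wedge D$, I would argue by monotonicity of the meet. The defining projections of the meet give $A\wedge C\leq A$ and $A\wedge C\leq C$; composing with the maps that witness the hypotheses $A\leq B$ and $C\leq D$ (legitimate because $\leq$ in the closure $\DD^*$ is transitive, by composition of linear maps in the commutative diagram) yields $A\wedge C\leq B$ and $A\wedge C\leq D$. Hence $A\wedge C$ is a common lower bound of $B$ and $D$, and the universal property of $B\wedge D$ as the greatest lower bound (the first theorem of Section \ref{The Lattice Proofs}) forces $A\wedge C\leq B\wedge D$.

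Feeding this into Lemma \ref{break} then delivers $A\vee C\leq B\wedge D$, as required. A symmetric route is equally available and worth noting as a sanity check: first establish $A\vee C\leq B\vee D$ by the dual monotonicity of the join (since $A,C\leq B\vee D$ by transitivity, the universal property of the join provides the map), and then apply the second implication of Lemma \ref{break} with $W=A\vee C$, $X=B$, $Y=D$ to reach the same conclusion.

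The genuine obstacle is packaged entirely inside Lemma \ref{break}: it is the step that upgrades a meet-style upper bound into a join-style upper bound, which is not a general lattice-theoretic phenomenon but reflects the special biequalizer/coequalizer structure of the persistence lattice. Once the lemma is accepted, the remainder of the argument is routine monotonicity, and the only point that needs care in the write-up is the explicit use of transitivity through the composed maps $A\wedge C\to A\to B$ and $A\wedge C\to C\to D$ (respectively their duals in the alternative route), so that the resulting cone on $\{B,D\}$ is genuinely a morphism in $\DD^*$ and the universal property of $B\wedge D$ can be invoked.
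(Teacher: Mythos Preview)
Your proposal is correct and follows essentially the same route as the paper: both arguments reduce the claim to Lemma~\ref{break} together with the monotonicity inequality $A\wedge C\leq B\wedge D$, which comes from $A\wedge C\leq A\leq B$ and $A\wedge C\leq C\leq D$ plus the universal property of the meet. The paper phrases this as a factorization $A\wedge C\to A\vee C\to B\wedge D\to B\vee D$ read off from the diagram, but the content is identical; your write-up is in fact more explicit about which instance of the lemma is being invoked and why the monotonicity step holds.
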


\begin{proof}
Assume that $A\leq B$ and $C\leq D$ and consider the following diagram:

\begin{center}
\begin{tikzpicture}[scale=.5]

  \node (c) at (0,2) {$A\vee C$};
  \node (d) at (0,-2) {$A\wedge C$};
  \node (a) at (-2,0) {$A$};
  \node (b) at (2,0) {$C$};
  \node (v) at (0,4) {$B\wedge D$};
    \node (t) at (-2,6) {$B$};
  \node (r) at (2,6) {$D$};
    \node (u) at (0,8) {$B\vee D$};

\draw[arrows=-latex',dashed, blue] (d) -- (c) node[pos=.5,above] {};
\draw[arrows=-latex',dashed, blue] (c) -- (v) node[pos=.5,above] {};
\draw[arrows=-latex',dashed, blue] (v) -- (u) node[pos=.5,above] {};
  
\draw[arrows=-latex'] (a) -- (c) node[pos=.5,above] {};
\draw[arrows=-latex'](b) -- (c) node[pos=.5,left] {};
\draw[arrows=-latex'] (d) -- (a) node[pos=.5,above] {};
\draw[arrows=-latex'](d) -- (b) node[pos=.5,right] {};

\draw[arrows=-latex'] (a) -- (t) node[pos=.5,above] {};
\draw[arrows=-latex'](b) -- (r) node[pos=.5,left] {};

\draw[arrows=-latex'] (v) -- (t) node[pos=.5,above] {};
\draw[arrows=-latex'](v) -- (r) node[pos=.5,left] {};
\draw[arrows=-latex'] (t) -- (u) node[pos=.5,above] {};
\draw[arrows=-latex'](r) -- (u) node[pos=.5,left] {};

\end{tikzpicture}
\end{center}

As $A\wedge C\leq B\vee D$, Lemma \ref{break} implies that the map $f:A\wedge C\rightarrow B\vee D$ decomposes into maps 
\[
A\wedge C\rightarrow A\vee C\rightarrow B\wedge D\rightarrow B\vee D.
\]
\end{proof}

To place this into context, consider $A\rightarrow B$ to be part of one filtration and $C\rightarrow D$ a second filtration such that they are interleaved. 
In this case for any class in $A\rightarrow B$, $A\wedge C$, and $B\vee D$ must also be in $C\rightarrow D$. In this case, the idea is that local conditions such as $A\wedge C \rightarrow A\vee C$ and $B\wedge D\rightarrow B\vee D$, imply something about the persistence between other elements. In above case, if we assume $\epsilon$-interleaving we can recover such a statement on these local conditions.  
We now give a more general statement:
\begin{theorem}
Let $A$, $B$, $C$ and $D$ be vector spaces.
Then $(A\wedge C)\vee (B\wedge D)\leq (A\vee C)\wedge (B\vee D)$.
\end{theorem}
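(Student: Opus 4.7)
The plan is to recognize that this inequality is a direct specialization of the immediately preceding theorem (the one asserting that $A\leq B$ and $C\leq D$ imply $A\vee C\leq B\wedge D$). The cleanest route is therefore to apply that theorem with $(A\wedge C,\, A\vee C)$ playing the role of $(A,B)$ and $(B\wedge D,\, B\vee D)$ playing the role of $(C,D)$. The conclusion of the preceding theorem, read under this substitution, is exactly $(A\wedge C)\vee(B\wedge D)\leq(A\vee C)\wedge(B\vee D)$.

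To carry this out, the first step is to verify the two hypotheses, namely $A\wedge C\leq A\vee C$ and $B\wedge D\leq B\vee D$. These are instances of the absorption law L4, and in the present categorical setting are realized by concrete linear maps: for the first, compose the limit projection $A\wedge C\to A$ (coming from the equalizer construction of $\meet$) with the colimit inclusion $A\to A\vee C$ (coming from the coequalizer construction of $\join$); analogously for the second. The second step is simply to quote the preceding theorem, whose proof was already supplied via Lemma \ref{break}, and read off the conclusion.

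If one prefers a self-contained argument that does not invoke the preceding theorem as a black box, the alternative plan is to build the required linear map directly by universal properties. One would observe that $A\wedge C$ maps into $A\vee C$ and, via $A\wedge C\to A\to B\vee D$ (using $A\leq B\vee D$ through the factorization $A\to A\vee C$ is not automatic in general, so one instead uses Lemma \ref{break} on the pair $(A,B)$ as ambient spaces with $C,D$ playing the complementary roles) one obtains compatible maps into both $A\vee C$ and $B\vee D$. An analogous construction applies to $B\wedge D$. Then the universal property of the equalizer defining $(A\vee C)\wedge(B\vee D)$ produces maps from $A\wedge C$ and from $B\wedge D$ into this meet, and finally the universal property of the coequalizer defining $(A\wedge C)\vee(B\wedge D)$ assembles these into the single desired map.

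The only real obstacle I anticipate is purely bookkeeping: making sure that when appealing to the preceding theorem, the substitution is legitimate, i.e.\ that $A\wedge C$, $A\vee C$, $B\wedge D$, and $B\vee D$ all lie in the (closure of the) diagram to which the lattice structure applies. Since we are already working in the persistence lattice $\Pe=(\DD^*;\leq)$, closure under $\meet$ and $\join$ is automatic, so this bookkeeping step is immediate. There is no delicate estimate or additional construction needed beyond citing the two preceding results.
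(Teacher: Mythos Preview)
Your proposal is correct and takes essentially the same route as the paper: both arguments ultimately rest on Lemma~\ref{break}, with the paper unfolding the construction explicitly via the Hasse diagram of Figure~\ref{stabfig}, while you more economically invoke the preceding theorem as a black box after checking its hypotheses $A\wedge C\leq A\vee C$ and $B\wedge D\leq B\vee D$. The only minor quibble is that these two hypotheses follow from the basic order relations $A\wedge C\leq A\leq A\vee C$ (meet below, join above) rather than from the absorption law L4 per se, but this is cosmetic.
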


\begin{proof}
Consider the diagram of Figure~\ref{stabfig} where $R_{1}=A\vee C$, $R_{2}=B\vee D$, $P_{1}=A\wedge C$ and $P_{2}=B\wedge D$.
The existence of the dashed maps is guaranteed by Lemma~\ref{break} and the fact that $A\wedge B\wedge C\wedge D\leq A\vee B\vee C\vee D$.

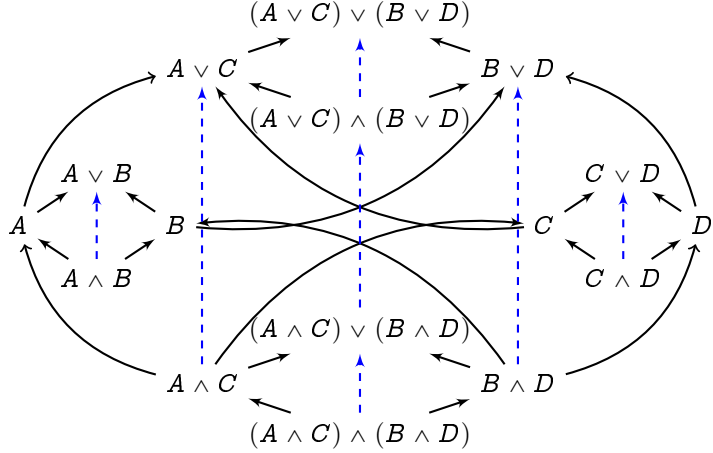
\begin{figure}

\begin{center}
\begin{tikzpicture}[thick, scale=.7]
  
  \node (1) at (0,4) {$(A\vee C)\vee (B\vee D)$};
  \node (2) at (-3,3) {$A\vee C$};
  \node (3) at (3,3) {$B\vee D$};
  \node (4) at (0,2) {$(A\vee C)\wedge (B\vee D)$};
  
  \node (5) at (-5,1) {$A\vee B$};
  \node (7) at (-6.5,0) {$A$};
  \node (8) at (-3.5,0) {$B$};
  \node (11) at (-5,-1) {$A\wedge B$};
  
  \node (6) at (5,1) {$C\vee D$};  
  \node (9) at (3.5,0) {$C$}; 
  \node (10) at (6.5,0) {$D$};  
  \node (12) at (5,-1) {$C\wedge D$}; 
  
  \node (13) at (0,-2) {$(A\wedge C)\vee (B\wedge D)$};  
  \node (14) at (-3,-3) {$A\wedge C$}; 
  \node (15) at (3,-3) {$B\wedge D$};  
  \node (16) at (0,-4) {$(A\wedge C)\wedge (B\wedge D)$};

\draw[arrows=-latex'] (7) -- (5) node[pos=.5,left] {};
\draw[arrows=-latex'] (8) -- (5) node[pos=.5,left] {};
\draw[arrows=-latex'] (11) -- (7) node[pos=.5,left] {};
\draw[arrows=-latex'] (11) -- (8) node[pos=.5,left] {};
\draw[arrows=-latex'] (9) -- (6) node[pos=.5,left] {};
\draw[arrows=-latex'] (10) -- (6) node[pos=.5,left] {};
\draw[arrows=-latex'] (12) -- (9) node[pos=.5,left] {};
\draw[arrows=-latex'] (12) -- (10) node[pos=.5,left] {};

\draw[arrows=-latex'] (2) -- (1) node[pos=.5,left] {};
\draw[arrows=-latex'] (3) -- (1) node[pos=.5,left] {};
\draw[arrows=-latex'] (4) -- (2) node[pos=.5,left] {};
\draw[arrows=-latex'] (4) -- (3) node[pos=.5,left] {};
\draw[->, bend left] (7) to node {} (2) ;
\draw[arrows=-latex', bend right] (8) to node {}  (3);
\draw[arrows=-latex', bend left] (9) to node {}  (2) ;
\draw[->, bend right] (10) to node {} (3) ;
\draw[arrows=-latex', bend left] (14) to node {} (9) ;
\draw[->, bend left] (14) to node {} (7) ;
\draw[arrows=-latex', bend right] (15) to node {} (8) ;
\draw[->, bend right] (15) to node {} (10) ;
\draw[arrows=-latex'] (14) -- (13) node[pos=.5,left] {};
\draw[arrows=-latex'] (15) -- (13) node[pos=.5,left] {};
\draw[arrows=-latex'] (16) -- (14) node[pos=.5,left] {};
\draw[arrows=-latex'] (16) -- (15) node[pos=.5,left] {};

\draw[arrows=-latex',dashed, blue] (16) -- (13) node[pos=.5,above] {};
\draw[arrows=-latex',dashed, blue] (13) -- (4) node[pos=.5,above] {};
\draw[arrows=-latex',dashed, blue] (4) -- (1) node[pos=.5,above] {};
\draw[arrows=-latex',dashed, blue] (11) -- (5) node[pos=.5,above] {};
\draw[arrows=-latex',dashed, blue] (12) -- (6) node[pos=.5,above] {};
\draw[arrows=-latex',dashed, blue] (14) -- (2) node[pos=.5,above] {};
\draw[arrows=-latex',dashed, blue] (15) -- (3) node[pos=.5,above] {};

\end{tikzpicture}
\end{center}

\caption{Hasse diagram representation of the stabilization theorem for subalgebras of a persistence lattice.}
\label{stabfig}
\end{figure}
\end{proof}

Here we do not introduce the notion of metrics or interleaving to give a more substantial result. 
However, we believe such a result is possible and we will address it in further work.


\subsection{Sections}

Finally we return to the bifiltration case to highlight the difference between our construction and the one we presented in Section~\ref{Problem Statement} which yielded the rank invariant. 
Consider Figure~\ref{fig:sections}. The rank invariant requires that all the elements of a square have class to contribute to the rank of the square. 
However, using our construction, a class will persist between two elements if and only if there is a sequence of maps in the diagram such that the classes map into each other (or from each other). 
In this case we can find persistent sections across incomparable elements yielding finer grained information than the rank invariant.
Furthermore, in highly structured diagrams such as multifiltrations, additional properties such as associativity have algorithmic consequences as well. 

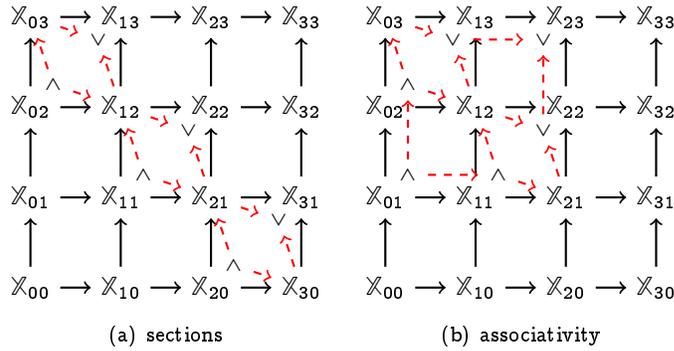
\begin{figure}
\subfigure[sections]{
     \begin{tikzpicture}[thick, scale=0.6] 
         \foreach \i  [evaluate=\i as \x using int(2*\i)] in {0,...,3}{
           \foreach \j [evaluate=\j as \y using int(2*\j)] in {0,...,3}{
             \node (p\i\j) at (\x,\y) {$\X_{\i\j}$}    ;
           }
         }
         \foreach \i [evaluate=\i as \x using int(\i+1)] in {0,...,2}{
           \foreach \j [evaluate=\j as \y using int(\j+1)]  in {0,...,2}{
             \draw[->] (p\i\j) -- (p\i\y)  ;
             \draw[->] (p\i\j) -- (p\x\j)  ;
           }
         }

         \foreach \j [evaluate=\j as \y using int(\j+1)]  in {0,...,2}{
           \draw[->] (p3\j) -- (p3\y)  ;
           \draw[->] (p\j3) -- (p\y3)  ;
           }
         \node (j1) at (2.5,2.5) {$\wedge$};
         \node (m1) at (3.5,3.5) {$\vee$};         
         \draw[->,dashed, red] (j1) -- (p12) ;
         \draw[->,dashed, red] (j1) -- (p21) ;
         \draw[->,dashed, red] (p12) -- (m1) ;
         \draw[->,dashed, red] (p21) -- (m1) ;
         \node (j) at (0.5,4.5) {$\wedge$};
         \node (m) at (1.5,5.5) {$\vee$};         
         \draw[->,dashed, red] (j) -- (p03) ;
         \draw[->,dashed, red] (j) -- (p12) ;
         \draw[->,dashed, red] (p12) -- (m) ;
         \draw[->,dashed, red] (p03) -- (m) ;
         \node (j2) at (4.5,0.5) {$\wedge$};
         \node (m2) at (5.5,1.5) {$\vee$};         
         \draw[->,dashed, red] (j2) -- (p30) ;
         \draw[->,dashed, red] (j2) -- (p21) ;
         \draw[->,dashed, red] (p30) -- (m2) ;
         \draw[->,dashed, red] (p21) -- (m2) ;
     \end{tikzpicture}
\label{exsections2}
}
\subfigure[associativity]{
     \begin{tikzpicture}[thick, scale=0.6]
         \foreach \i  [evaluate=\i as \x using int(2*\i)] in {0,...,3}{
           \foreach \j [evaluate=\j as \y using int(2*\j)] in {0,...,3}{
             \node (p\i\j) at (\x,\y) {$\X_{\i\j}$}    ;
           }
         }
         \foreach \i [evaluate=\i as \x using int(\i+1)] in {0,...,2}{
           \foreach \j [evaluate=\j as \y using int(\j+1)]  in {0,...,2}{
             \draw[->] (p\i\j) -- (p\i\y)  ;
             \draw[->] (p\i\j) -- (p\x\j)  ;
           }
         }
         \foreach \j [evaluate=\j as \y using int(\j+1)]  in {0,...,2}{
           \draw[->] (p3\j) -- (p3\y)  ;
           \draw[->] (p\j3) -- (p\y3)  ;
           }
         \node (j1) at (2.5,2.5) {$\wedge$};
         \node (m1) at (3.5,3.5) {$\vee$};         
         \draw[->,dashed, red] (j1) -- (p12) ;
         \draw[->,dashed, red] (j1) -- (p21) ;
         \draw[->,dashed, red] (p12) -- (m1) ;
         \draw[->,dashed, red] (p21) -- (m1) ;
         \node (j) at (0.5,4.5) {$\wedge$};
         \node (m) at (1.5,5.5) {$\vee$};         
         \draw[->,dashed, red] (j) -- (p03) ;
         \draw[->,dashed, red] (j) -- (p12) ;
         \draw[->,dashed, red] (p12) -- (m) ;
         \draw[->,dashed, red] (p03) -- (m) ;
         \node (j2) at (0.5,2.5) {$\wedge$};
         \node (m2) at (3.5,5.5) {$\vee$};         
         \draw[->,dashed, red] (j2) -- (j) ;
         \draw[->,dashed, red] (j2) -- (j1) ;
         \draw[->,dashed, red] (m1) -- (m2) ;
         \draw[->,dashed, red] (m) -- (m2) ;
     \end{tikzpicture}
\label{exsections1}
}
\caption{\label{fig:sections}
While the associativity of the lattice operations in the bifiltration corresponds to the possible paths in the diagram \subref{exsections1}, the sections in the lattice can be explained by the diagram \subref{exsections2}.
}
\label{exsections}
\end{figure}


\section{Discussion}
\label{discussion}

In this paper, we have investigated the properties of a lattice which contains information about the persistent homology classes in a general commutative diagram of vector spaces. There are still numerous open questions including:
\begin{itemize}
\item What kind of decompositions exist in the spirit of persistence diagrams for this distributive lattice, since all maximal total orders are the same length and therefore we can decompose this lattice into a canonical sequence of antitotal orders?
\item What are further algorithmic implications of this structure?
\item What is the correct metric to consider to general commutative diagrams as ``close''?
\item In what other contexts do such diagrams appear and what can we say about their structure?
\end{itemize}
We will address some of these questions in a subsequent paper.


\bibliographystyle{plain}


%
\appendix
%


\section{Basics of Lattice Theory}
\label{Basics of Lattice Theory}

\subsection{Orders and lattice structures}
Partial orders are important tools in the study of topology.
Moreover, lattices are partially ordered sets (or posets for short) that have just enough structure to be seen as algebraic structures with operations determined by the underlying order structure.
In what follows we will provide the basic knowledge on the theory of lattices regarding the needs of this paper. 
For further reading on lattice theory and, in general, on universal algebra, we suggest \cite{Ba40}, \cite{Sa81}, \cite{Gr71} and \cite{Gr79}.

\begin{definition}
A \emph{preorder} is a binary relation $R$ that satisfies \emph{reflexivity} (i.e., for all $x\in A$, $xRx$) and \emph{transitivity} (i.e., for all $x,y,z\in A$, $xRy$ and $yRz$ implies $xRz$).
A preorder $\leq$ is a \emph{partial order} if , for all $x,y\in A$, $x\leq y$ and $y\leq x$ implies $x=y$ (antisymmetry). 
A \emph{poset} $(P, \leq)$ is an order structure consisting of a set $P$ and a partial order $\leq$. 
\end{definition}

\begin{example}
Examples of posets are the real numbers ordered by the standard order, the natural numbers ordered by divisibility, the set of subspaces of a vector space ordered by inclusion, or the vertex set of a directed acyclic graph ordered by reachability. 
\end{example}

\begin{definition}
A partial order is named \emph{total order} if every pair of elements is related, that is, for all $x,y\in A$, $x\leq y$ or $y\leq x$. 
On the other hand, an \emph{antitotal order} is a partial order for which no two distinct elements are related.
For every finite partial order there exists an antitotal order $S$ and a partition of the order in $A$ into a family $F$ of total orders such that the number of total orders in the partition equals the cardinality of $S$. Thus, $S$ must be the largest antitotal order in the order, and $F$ must be the smallest family of total orders into which the order can be partitioned (cf. \cite{Dil50}). 
Dually, The size of the largest total order in a partial order (if finite) equals the smallest number of antitotal orders into which the order may be partitioned (cf. \cite{Mir71}).
\end{definition}

\begin{example}
The natural numbers form a total order under the usual order, and form a partial order under divisibility. 
\end{example}

\begin{definition}
A \emph{lattice} is a poset for which all pairs of elements have an infimum and a supremum.
Whenever every subset of a lattice $L$ has a supremum and an infimum, $L$ is 
named a \emph{complete lattice}.
Every total order is a lattice. Other examples of lattices are the power set of A ordered by subset inclusion, or the collection of all partitions of A ordered by refinement.
A lattice $A$ can be seen as an algebraic structure $(L;\wedge ,\vee)$ with two 
operations $\wedge$ and $\vee$ satisfying \emph{associativity} (i.e., $x\wedge (y\wedge z) = (x \wedge y)\wedge z$ and $x\vee (y\vee z) = (x \vee y)\vee z$), \emph{idempontence} (i.e., $x\wedge x = x = x\vee x$), \emph{commutativity} (i.e., $x\wedge y=y\wedge x$ and $x\vee y=y\vee x$) and absorption (i.e., $x\wedge (x\vee y)=x=x\vee (x\wedge y)$).
The equivalence between this algebraic perspective of a lattice $L$ and its 
ordered perspective is given by the following: for all $x,y\in L$, $x\leq y$ iff 
$x\wedge y=x$ iff $x\vee y=y$.
\end{definition}


\begin{example}\label{partlat}
Recall that an \emph{equivalence} $E$ in a set $A$ is a preorder such that, for all $x,y\in A$, $xEy$ implies $yEx$ (symmetry). 
The set $A/E =\set{{x\in A: xEa} : a\in A}$ is a partition of $A$.
Conversely, every partition $P$ of $A$ determines an equivalence $\theta_{P}$ of $A$ defined 
by $x\theta_{P} y$ iff there exists $X\in P$ such that $x,y\in X$.
Thus the notions of equivalence relation and partition are essentially the same. 
The axiom of choice guarantees for any partition of a set $X$ the existence of a subset of $X$ containing exactly one element from each part of the partition. 
This implies that given an equivalence relation on a set one can select a canonical representative element from every equivalence class.
Arithmetical equality and geometrical similarity are examples of well known equivalences.
The partition of a set $X$ into nonempty and non-overlapping subsets, called \emph{blocks} (or \emph{cells}), determines a complete lattice for which the meet operation $\wedge$ is the intersection of blocks. 
\end{example}


\begin{example}\label{sub slat}
It is well known that the subspaces of a vector space form a complete lattice (cf. \cite{Ba40}). 
In fact, considering the partial order structure to be the subspace relation, whenever $A$ and $B$ are vector spaces one can define the lattice operations as $A\wedge B=A\cap B$ and $A\vee B=A\oplus B$. 
The minimum of this lattice is the trivial subspace $\set{0}$ while the maximum is the full vector space $V$. 
Clearly, $A\cap B=A$ iff $A\subseteq B$ iff $A\oplus B=B$.
Furthermore, $A\cap A\equiv A$, $A\oplus A\equiv A$, $A\cap (B\cap C)\equiv (A\cap B)\cap C$, $A\oplus (B\oplus C)\equiv (A\oplus B)\oplus C$ and
$A\cap (A\oplus B)\equiv A\cap A\oplus (A\cap B).$
If $V$ is a finite dimensional vector space over the field $K$ and $W\leq V$, then there exists $U\leq V$ such that $V=W\oplus U$ (\cite{La87}). On the other hand, $U\cap W=\set{0}$ giving us a sense of complement. 
This complement is not unique (and thus the lattice cannot be cancellative, or equivalently, the lattice is not distributive as will later be discussed). 
A linear lattice is a sublattice of the equivalences lattice of a set, on which any two elements commute.
A typical example can be found in Geometry: the lattice of subspaces of a vector space is isomorphic to a commuting equivalencesÕ lattice, deÞned in the vector space seen as a set.
If $V$ is a vector space and $W$ is one of its subspaces, we deÞne the equivalence of two vectors $x,y\in V$ as $x\equiv_{W} y$ if, and only if, $x-y\in W$, associating to each subspace an equivalence. If $W'$ is another subspace of $V$, then the equivalences $\equiv_{W}$ and $\equiv_{W'}$ commute, describing an isomorphism between the lattice $L(V)$ of all vector subspaces of $V$ and a lattice of commuting equivalences, $(Eq_{com}(V)  \cap L(V^{2});\cap,\circ)$.
Such lattices are of frequent occurrence, including the lattice of normal subgroups of a group, or the lattice of ideals of a ring. 
\end{example}


\begin{example}\label{vectlat}
A \emph{vector lattice} (or \emph{Riesz space}) $E$ is any vector space endowed with a partial order $\leq$ such that $(E;\leq)$ is a lattice and, for all vectors $x,y,z\in E$ and any scalar $\alpha \geq 0$: $x\leq y$ implies $x+y\leq y+z$, and $x\leq y$ implies $\alpha x\leq \alpha y$.
Given a topological space $X$ , its ring of continuous functions $C(X)$ is a vector lattice. 
In particular, any finite dimensional Euclidean space $\R^{n}$ is a vector lattice.
Roughly, vector lattice is a partially ordered real vector space where the order structure is a lattice.
A representation of such an algebraic structure is given in \cite{Yo42} assuming the Archimedean-unit and describing a representation space using maximal prime ideals. Being vector spaces, subalgebras are just subspaces that constitute sublattices. 
Riesz spaces have wide range of applications, having a great impact in measure theory. 
A large discussion on this topic can be found in \cite{Lu00}.
A Banach space is any complete normed vector lattice. Examples of such lattices are $C^{*}$ algebras, constituting associative algebras over the complex numbers which are Banach spaces with an involution map.
\end{example}

\begin{definition}
A lattice $L$ is \emph{complete} if every subset $S$ of $L$ has both a greatest lower bound $\bigwedge S$ and a least upper bound $\bigvee S$ in $L$.
In particular, when $S$ is the empty set, $\bigwedge S$ is the greatest element of $L$. Likewise, $\bigvee \emptyset$ yields the least element. 
Complete lattices constitute a special class of bounded lattices. Any lattice with arbitrary meets and a biggest element is complete. This condition and its dual characterize complete lattices.
\end{definition}

\begin{example}
Examples of complete lattices are abundant: the power set of a given set ordered by inclusion with arbitrary intersections and unions as meets and joins; the non-negative integers ordered by divisibility where the operations are given by the least common multiple and the greatest common divisor; the subgroups of a group, the submodules of a module or the ideals of a ring ordered by inclusion; the unit interval $[0,1]$ and the extended real number line, with the familiar total order and the ordinary suprema and infima. A totally ordered set with its order topology is compact as a topological space if it is complete as a lattice (cf. \cite{Gr71}).
\end{example}

\begin{remark}
There are several mathematical concepts that can be used to represent complete lattices being the Dedekind-MacNeille completion one of the most popular ones. It is used to extend a poset to a complete lattice. By applying it to a complete lattice one can see that every complete lattice is isomorphic to a complete lattice of sets. When noting that the image of any closure operator on a complete lattice is again a complete lattice one obtains another representation: since the identity function is a closure operator too, this shows that complete lattices are exactly the images of closure operators on complete lattices. 
\end{remark}


\subsection{Boolean algebras and Heyting algebras}
Maybe due to their important role as models to classical logic (constructive logic), Boolean algebras (Heyting algebras, respectively) are some of the the most well known lattices in Mathematics. 
We will present these varieties of algebras in the following paragraphs, discuss their important properties and present some examples.

 
\begin{definition}
A lattice $L$ is \emph{modular} if, for all $x,y,z\in S$, $y\leq x$ implies $x\wedge (y\vee z)=y\vee (x\wedge z)$. 
A lattice $L$ is \emph{distributive} if, for all $x,y,z\in S$, it satisfies one of the following equivalent equalities:

\begin{itemize}
\item[(d1)] $x\wedge (y\vee z)=(x\wedge y)\vee (x\wedge z)$; 
\item[(d2)] $x\vee (y\wedge z)=(x\vee y)\wedge (x\vee z)$;
\item[(d3)] $(x\vee y) \wedge (x\vee z) \wedge (y\vee z) = (x\wedge y)\vee (x\wedge z)\vee (y\wedge z)$. 
\end{itemize}

\end{definition}

\begin{example}
The lattice of normal subgroups of a group is modular (cf. \cite{Ba40}, V).
Other examples of modular lattices are the elements of any projective geometry or the ideals of any modular lattice (under set-inclusion)
A lattice of subsets of a set is usually called a \emph{ring of sets}. Any ring of sets forms a distributive lattice in which the intersection and union operations correspond to the lattice's meet and join operations, respectively. Conversely, every distributive lattice is isomorphic to a ring of sets; in the case of finite distributive lattices, this is \emph{Birkhoff's Representation Theorem} and the sets may be taken as the lower sets of a partially ordered set. Every field of sets and so also any $\sigma$-algebra also is a ring of sets (cf. \cite{Ba40}).
\end{example}

\begin{remark}
Below are the Hasse diagrams of the diamond $\mathbf{M}_{3}$ and the pentagon $\mathbf{N}_{5}$, the forbidden algebras regarding distributivity in lattices.

\begin{center}
$\begin{array}{ccc}

\begin{tikzpicture}[scale=.7]

  \node (1) at (0,1){$1$} ;
  \node (a) at (-1,0){$a$} ;
  \node (b) at (0,0){$b$};
  \node (c) at (1,0){$c$}  ;
  \node (0) at (0,-1){$0$} ;
  
    \draw (1) -- (c) -- (0) -- (a) -- (1) -- (b) -- (0);

\end{tikzpicture}

&

&

\begin{tikzpicture}[scale=.7]

  \node (1) at (0,1){$1$} ;
  \node (a) at (-1,0){$a$} ;
  \node (b) at (1,0.5) {$b$} ;
  \node (c) at (1,-0.5) {$c$} ; 
  \node (0) at (0,-1) {$0$} ;
  
      \draw (1) -- (a) -- (0) -- (c) -- (b) -- (1) ;

\end{tikzpicture}\\

\mathbf{M}_{3}

&
&

\mathbf{N}_{5}

\end{array}$
\end{center}

The following are useful characterizations of the distributivity and modularity of a lattice $L$:
\begin{itemize}
\item[(i)] $L$ is modular iff all $x,y,z$ with $y\leq z$ are such that $x\wedge y=x\wedge z$ and $x\vee y=x\vee z$ imply $y=z$;
\item[(ii)] $L$ is distributive iff all $x,y,z$ are such that $x\wedge y=x\wedge z$ and $x\vee y=x\vee z$ imply $y=z$;
\item[(iii)] $L$ is modular iff if it does not have embedded any copy of the pentagon $\mathbf{N}_{5}$;
\item[(iv)] $L$ is distributive iff if it does not have embedded any copy of the diamond $\mathbf{M}_{3}$ or of the pentagon $\mathbf{N}_{5}$.
\end{itemize}
\end{remark}

\begin{remark}
The modularity of distributive lattices also determines the \emph{diamond isomorphism theorem} describing the isomorphism between $[a \wedge b, b]$ and $[a, a \vee b]$ using the maps $f: (a\vee b)/a \rightarrow b/(a\wedge b)$, $x\mapsto x\wedge b$, and $g:  b/(a\wedge b)\rightarrow (a\vee b)/a$, $y\mapsto a\vee y$. This result is equivalent to the 3rd isomorphism theorem in Group Theory, being a particular case of the Correspondence Theorem established in the domain of Universal Algebra.
\end{remark}

\begin{example}
While every vector lattice, defined in Example \ref{vectlat}, is distributive (see \cite{Ba40}), the subspace lattices defined in Example \ref{sub slat}) are modular but not distributive: all indecomposable triples of vector spaces $X$, $Y$ and $Z$ but one are distributive; the only nondistributive indecomposable triple is that of three lines in a plane (cf. \cite{Po05}). 
Let $V$ be a 2-dimensional vector space. Consider the sublattice of the subspace lattice where the bottom element is the zero space, the top element is $V$, and the rest of the elements of $Sub(V)$ are 1-dimensional: lines through the origin. 
For 1-dimensional spaces, there is no relation $a\leq b$ unless $a$ and $b$ coincide. 
The Hasse diagram of such a lattice is the diamond $\mathbf{M}_{3}$ above where $1=V$, the total space.
Observe that for distinct elements $a,b,c$ in the middle level, we have for example $x\wedge y=0=x\wedge z$ ($0$ is the largest element contained in both $a$ and $b$), and also for example $b\vee c=1$ ($1$ is the smallest element containing $b$ and $c$). 
It follows that $a\wedge (b\vee c)=a\wedge 1=a$ whereas $(a\wedge b)\vee (a\wedge c)=0\vee 0=0$. 
The distributive law thus fails.
\end{example}

\begin{example}
Also a module $M$ over a ring $R$ can be considered a lattice with operations $+$ and $\cdot$ as $\vee$ and $\wedge$, respectively.
Lattice modularity corresponds to the Jordan-Dedekind total order condition.   
Moreover, $M$ is distributive iff for all $m,n \in M$, $(m+n)R=mI+nI$ for some ideal $I$ (cf. \cite{Tug98}).
The condition is easily seen to be necessary. For sufficiency, observe that distributivity is equivalent to $(m+n)R=(m+n)R\cap mR+(m+n)R\cap nR$ and to prove this, the argument says: the modular law implies that $(m+n)\cap RmR=(mI+nI)\cap mR=mI$ and respectively for $nI$.
\end{example}

\begin{example}
Moreover, the lattice of subgroups of a group ordered by inclusion is a modular lattice that is not distributive: consider $G$ to be the non-cyclic group of order 4, and $a$, $b$ and $c$ the three subgroups of order 2 having two distinct elements. We thus get the copy of $M_{3}$ in the Hasse diagram above (cf. \cite{Joh86}).
\end{example}

\begin{example}
Furthermore, the partition lattice, defined in Example \ref{partlat}, is not distributive for $n>3$ and is not modular for $n>4$. In detail just consider the following Hasse diagrams of the correspondent forbidden algebras:

\begin{center}
$\begin{array}{ccc}

\begin{tikzpicture}[scale=.7]

  \node (1) at (0,2){$123$} ;
  \node (a) at (-2,0){$1/23$} ;
  \node (b) at (0,0){$13/2$};
  \node (c) at (2,0){$12/3$}  ;
  \node (0) at (0,-2){$1/2/3$} ;
  
    \draw (1) -- (c) -- (0) -- (a) -- (1) -- (b) -- (0);

\end{tikzpicture}

&

&

\begin{tikzpicture}[scale=.55]

  \node (1) at (0,2.5){$1234$} ;
  \node (a) at (-2,0){$12/34$} ;
  \node (b) at (2,1) {$13/24$} ;
  \node (c) at (2,-1) {$1/3/24$} ; 
  \node (0) at (0,-2.5) {$1/2/3/4$} ;
  
      \draw (1) -- (a) -- (0) -- (c) -- (b) -- (1) ;

\end{tikzpicture}

\end{array}$
\end{center}

\end{example}


\begin{definition}
A \emph{Boolean algebra} is a distributive lattice with a unary operation $\neg$ 
and nullary operations $0$ and $1$ such that for all elements $x,y,z\in A$ the 
following axioms hold:

\begin{itemize}
\item[$L_6$.] $a\vee 0 = a$ and $a\wedge 1=a$; 
\item[$L_7$.] $a\vee \neg a = 1$ and  $a\wedge \neg a = 0$.
\end{itemize}

\end{definition}

\begin{example}
Examples of Boolean algebras are the power set of any set $X$ ordered by inclusion, or the divisors $D_{n}$ of a natural number $n$ bigger than $1$ that is not divided by the square of any prime number. 
\end{example}

\begin{remark}
The following result permits us to identify 
a Boolean algebra by observation of its Hasse diagram. 
Whenever $L$ is a bounded distributive lattice, the following are equivalent:

\begin{itemize}
\item[(i)] $L$ is a Boolean algebra;
\item[(ii)] for all $x\in L$ there exists $y\in L$ such that $x\wedge y=0$ and $x\vee y=1$;
\item[(iii)] for all $x,y,z\in L$ such that $x\leq y\leq z$ there exists $w\in L$ such that $y\wedge w=x$ 
and $y\vee w=z$.  
\end{itemize}  

Due to this it is easy to observe that total orders are not Boolean algebras. 
The distributive lattice represented by the Hasse diagram below is not a Boolean algebra: 
consider the total order $\set{3,x,4}$ and observe that there is no $y\in L$ such that $x\wedge y=4$ and $x\vee y=3$.

\begin{center}
\begin{tikzpicture}[scale=0.7]

  \node (1) at (0,2){$1$} ;
  \node (2) at (-1,1){$2$} ;
  \node (3) at (1,1) {$3$} ;
  \node (a) at (-2,0) {$a$} ; 
   \node (ad) at (2,0){$a'$} ;
  \node (x) at (0,0) {$x$} ;
  \node (4) at (-1,-1) {$4$} ; 
  \node (5) at (1,-1) {$5$} ;
  \node (0) at (0,-2) {$0$} ;
    
      \draw (0) -- (4) -- (a) -- (2) -- (1) -- (3) -- (x) -- (4) ;
      \draw (0) -- (5) -- (ad) -- (3) -- (x) -- (4) -- (0) -- (5) -- (x) -- (2)  ;

\end{tikzpicture}
\end{center}
\end{remark}

%
%
\begin{definition}
A bounded lattice $L$ is a \emph{Heyting algebra} if, for all $a,b\in  L$ there is a greatest element $x\in L$ such that $a\wedge x\leq b$. This element is the \emph{relative pseudo-complement} of $a$ with respect to $b$ denoted by $a\Rightarrow b$. 
\end{definition}

\begin{example}
Examples of Heyting algebras are the open sets of a topological space, as well as all the finite nonempty total orders (that are bounded and complete). 
Furthermore, every complete distributive lattice $L$ is a Heyting algebra with the implication operation given by $x \Rightarrow y = \bigvee \set{x\in L \mid x \wedge a \leq b}$.
\end{example}


%
\section{Algebraic constructions}
\label{Algebraic Constructions}

\subsection{On limits and colimits}
In the following paragraphs of this appendix we shall recall the categorical nature of products and coproducts of vector spaces.
We will also recall the definitions of equalizer and coequalizer, give some examples, and discussing their relation to pullbacks and pushouts.


\begin{remark}
As any other poset, the set of vector spaces ordered by $\leq$ constitutes a category, denoted by $\VV$, considering vector spaces as elements and linear maps as morphisms. It is a subcategory of $\mathbf{R-mod}$, the category of $R$-modules and $R$-module homomorphisms.
Recall that in the category of modules over some ring $R$, the product is the cartesian product with addition defined componentwise and distributive multiplication. 
Thus, the direct product of vector spaces $A$ and $B$, noted by $A\times B$, is a vector space when we define the sum and product by scalar componentwise. It is the biggest vector space that can be projected into $A$ and $B$, simultaneously.
Recall also that $A\cup B$ is a subspace iff $A=B$. The smallest element of $\VV$ containing $A\cup B$ is $A + B=\set{a+b\mid a\in A, b\in B}.$
The \emph{direct sum} of vector spaces $A$ and $B$, noted by $A\oplus B$, is the smallest 
vector space which contains the given vector spaces as subspaces with minimal constraints.
The direct product is the \emph{categorical product}, noted by $\sqcap$: whenever $A$ and $B$ are vector spaces, the natural projections $\pi_{A}: A\times B\rightarrow A$ and $\pi_{B}: A\times B\rightarrow B$ show that $A\times B\leq A,B$; on the other hand, whenever $D$ is a vector space such that $D\leq A,B$ with maps $f:D\rightarrow A$ and $g:D\rightarrow B$, the map $f\times g:D\rightarrow A\times B$ defined by $f\times g(x)=(f(x),g(x))$ is well defined and unique up to isomorphism, ensuring us with the universal property.  
As well, the direct sum is the \emph{categorical coproduct}, noted by $\sqcup$: whenever $A$ and $B$ are vector spaces, the inclusion maps $i_{A}:A\rightarrow A\oplus B$ and $i_{B}:B\rightarrow A\oplus B$ show us that $A,B\leq A\oplus B$; on the other hand, whenever $C$ is a vector space such that $A,B\leq C$ with maps $f:A\rightarrow C$ and $g:B\rightarrow C$, the map $f\oplus g:A\oplus B\rightarrow C$ defined by $f\oplus g(x)=f\oplus g(x_{A}+x_{B})=f(x_{A})+f(x_{B})$ is well defined and unique up to isomorphism, ensuring us with the universal property.   
A \emph{biproduct} of a finite collection of objects in a category with zero object is both a product and a coproduct. 
In a preaddictive category the notions of product and coproduct coincide for finite collections of objects.
The biproduct generalizes the direct sum of modules. The category of modules over a ring is preaddictive (and also additive).
In particular, the category of vector spaces over a field is preaddictive with the trivial vector space as zero object.
\end{remark}


In the following we are going to discuss in detail the (generalized) categorical concepts of equalizer and coequalizer that we use in this paper to construct the lattice operations. 
We will also interpret these in the framework of persistence in order to use such ideas to construct the lattice operations. 

\begin{definition}
Given a pair of vector spaces $A$ and $C$ with two linear maps $f,g:A\Rightarrow B$ between them, the \emph{equalizer} of $f$ and $g$ is a pair $(E, e)$ where $E$ is a vector space (usually called \emph{kernel set} of the equalizer) and $e:E\rightarrow A$ is a linear map such that $fe=ge$, with the following universal property: for any other vector space $E'$ and linear map $e' : E' \rightarrow A$ such that $fe' = ge'$, there exists a unique linear map $\phi : E' \rightarrow E$ such that $e\phi = e'$ (as represented in the diagram of Figure \ref{figcoeq}). 

Dually, the $\emph{coequalizer}$ of $f$ and $g$ is a pair $(H, h)$ where $H$ is a vector space (usually called the \emph{quotient set} of the coequalizer) and $h:A\rightarrow H$ is a linear map such that $hf=hg$, with the following universal property: for any other vector space $H'$ and linear map $h' : A \rightarrow H'$ in $\VV$ such that $h'f = h'g$, there exists a unique morphism $\phi : H \rightarrow H'$ such that $\phi h = h'$ (as represented in the diagram of Figure \ref{figcoeq}).

\end{definition}

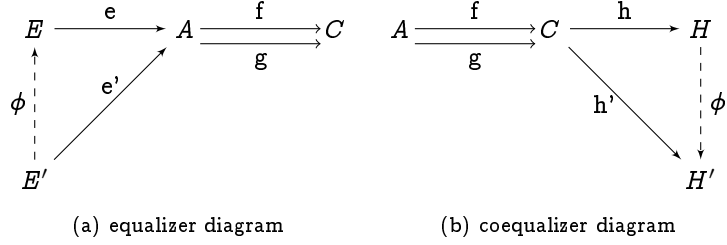
\begin{figure}
\subfigure[equalizer diagram]{
\begin{tikzpicture}

  \node (e) at (-2,0) {$E$};
  \node (a) at (0,0) {$A$};
  \node (b) at (2,0) {$C$};
  \node (f) at (-2,-2) {$E'$};
  
\draw[arrows=-latex'] (e) -- (a) node[pos=.5,above] {e};
\draw[arrows=-latex'](f) -- (a) node[pos=.5,above] {e'};
\draw[arrows=-latex', dashed] (f) -- (e) node[pos=.5,left] {$\phi$};
   
\draw[postaction={transform canvas={yshift=-2mm},draw}]
[->] (0.2,0) -- (1.8,0) node[pos=.5,above] {f} node[pos=0.5,below,yshift=-2mm] {g};

\end{tikzpicture}   
}
\subfigure[coequalizer diagram]{
\begin{tikzpicture}

  \node (e) at (2,0) {$H$};
  \node (a) at (-2,0) {$A$};
  \node (b) at (0,0) {$C$};
  \node (f) at (2,-2) {$H'$};
  
\draw[arrows=-latex'] (b) -- (e) node[pos=.5,above] {h};
\draw[arrows=-latex'](b) -- (f) node[pos=.5,left] {h'};
\draw[arrows=-latex', dashed] (e) -- (f) node[pos=.5,right] {$\phi$};
   
\draw[postaction={transform canvas={yshift=-2mm},draw}]
[->] (-1.8,0) -- (-0.2,0) node[pos=.5,above] {f} node[pos=0.5,below, yshift=-2mm] {g};

\end{tikzpicture}
}
\caption{Diagram representation of the equalizer and coequalizer of maps $f$ and $g$ between vector spaces $A$ and $C$ on a given diagram $\DD$.}
\label{figcoeq}
\end{figure}

\begin{example} 
%
%
In the category of sets, given maps $f,g: X\rightarrow Y$, the equalizer of $f$ and $g$ is the set $\set{x\in X\mid f(x)=g(x)}$ while the coequalizer of $f$ and $g$ is the quotient of $Y$ by the equivalence generated by the set $\set{(f(x),g(x)) | x \in X}$, i.e., the smallest equivalence $\theta$ such that for every $x\in X$, $f(x)\theta g(x)$ holds.
For instance, consider the sets $X = \set{a, c, d}$ and $Y = \set{a, c, d, e}$, and the maps $f:X\rightarrow Y = \set{a\mapsto e, c\mapsto d, d\mapsto c}$ and $g:X\rightarrow Y = \set{a\mapsto d, c\mapsto d, d\mapsto c}$. 
The equalizer of $f$ and $g$ is given by the kernel set $\EE = \set{c, d}$ and the injection $eq:E\rightarrow X = \set{c\mapsto c, d\mapsto d}$.
On the other hand, the coequalizer of $f$ and $g$ is given by the quotient set $\CC=\set{\set{a}, \set{c}, \set{d, e}}$ and the surjection $coeq:Y\rightarrow \CC = \set{a\mapsto \set{a}, c\mapstoÊ\set{c}, d\mapsto \set{d, e}, e\mapsto \set{d, e}}$.

The equalizer of the real functions $f(x,y)=x^2+y^2$ and $g(x,y)=4$ is the 
circumference $E=\set{(x,y)\in \R^2\mid x^2+y^2=4}$ together with projection maps. 
\end{example}

\begin{example}
%
%
In the category of groups, the equalizer of homomorphisms $f,g: X\rightarrow Y$ can still be seen as the solution set of an equation determined by $f(x)=g(x)$ while their coequalizer is the quotient of $Y$ by the normal closure of the set $S=\set{f(x)g(x)^{-1}\mid x\in X}$. In detail, the elements of $Y/N$ must be equivalence classes $y/N$ such that, for all $y,y'\in Y$, 
\[
y\theta y' \text{  iff  } f.g^{-1}\in N.
\] 
In particular, for abelian groups, the equalizer is the kernel of the morphism $f-g$ while the coequalizer is the factor group $Y/im(f-g)$, i.e., the cokernel of $f-g$. Moreover, the kernel of a linear map $f$ is the equalizer of the maps $f$ and $0$ constituting a normal subgroup with the following property: for any normal subgroup $N\subseteq G$, $N\subseteq \ker f$ iff there is a (necessarily unique) homomorphism $h:X/N \rightarrow Y$ such that $h\circ \pi_{N}=f$ implying the commutativity of the diagram below: 

\begin{center}
\begin{tikzpicture}

  \node (e) at (-2,0) {$X$};
  \node (a) at (0,0) {$Y$};
  \node (f) at (-2,-2) {$X/N$};
  
\draw[arrows=-latex'] (e) -- (f) node[pos=.5,left] {$\pi_{N}$};
\draw[arrows=-latex'](e) -- (a) node[pos=.5,above] {$f$};
\draw[arrows=-latex', dashed] (f) -- (a) node[pos=.5,right] {$h$};
   
\end{tikzpicture}   
\end{center}

\noindent Hence, every group homomorphism factors as a quotient followed by an injective homomorphism (every group homomorphism has a kernel).
On the other hand, a coequalizer of a homomorphism $f: X \rightarrow Y$ and the zero homomorphism is the natural surjection $\pi_{f(X)} : Y \rightarrow Y/f(X)$ on the quotient $Y/f(X)$. More generally, a coequalizer of homomorphisms $f, g : X\rightarrow Y$ is a coequalizer of 
$f - g : X \rightarrow Y$ and the zero homomorphism, that is, the natural surjection $Y \rightarrow Y/(f - g)(X)$. 
This holds for the category of vector spaces and linear maps.
\end{example}

%
%

%
\begin{remark}
Now we will show that, for the purposes of this paper, the information retrieved by pullbacks and pushouts is essential the same than the one obtained by computing equalizers and coequalizers, respectively. 
It is well known that (pushouts) pullbacks can be constructed from (co)equalizers: a pullback is the equalizer of the morphisms $f \circ \pi_1$, $g \circ \pi_2 : X \times Y \rightarrow Z$ where $X \times Y$ is the binary product of $X$ and $Y$, and $\pi_1$ and $\pi_2$ are the natural projections, showing that pullbacks exist in any category with binary products and equalizers. In general, we have the following:
The equalizer of the family $(f_{i})_{i\in I}:A\oplus B \rightarrow C$ is the pullback of the pair of morphisms $((f_{iA})_{i\in I}, (f_{iB})_{i\in I})$ with $f_{iA}:A\rightarrow C$, $f_{iB}:B\rightarrow C$ and $f_{i}(z)=f_{iA}(x)+f_{iB}(y) \text{,  for all  } z=x+y\in A\oplus B \text{ and all } i\in I.$
Dually, the coequalizer of the family $(g_{i})_{i\in I}:D \rightarrow A\oplus B$ is exactly the pushout of the pair of morphisms $((g_{jA})_{j\in J}, (g_{jB})_{j\in J})$ with $g_{jA}:D\rightarrow A$, $g_{jB}:D\rightarrow B$ and $g_{j}(x)=g_{jA}(x)\oplus g_{jB}(x) \text{,  for all  } x\in D \text{ and all } j\in I.$.
To see this in detail, just observe that all maps $f_{i}:A\oplus B \rightarrow C$ split into maps $f_{iA}:A \rightarrow C$ and $f_{iB}:B \rightarrow C$ with $f_{j}(x)=f_{jA}(x)+f_{jB}(x) \text{,  for all  } j\in I.$ Hence, the  diagrams of Figure \ref{pbfig} are equivalent.
Clearly, $e=e_{A}+e_{B}$ defined by $e(x)=e_{A}(x)+e_{B}(x)$ and thus 
\[
\set{x\in A\oplus B\mid f_{i}(x)=f_{j}(x)}=\set{(x,y)\in A\times B\mid f_{i}(x)=f_{j}(y)}.
\]
The dual result has a similar argument.
\end{remark}
\begin{figure}
\subfigure[equalizer diagram]{
\begin{tikzpicture}
  \node (e) at (-2,0) {$E$};
  \node (a) at (0,0) {$A\oplus B$};
  \node (b) at (2,0) {$C$};
%
\draw[arrows=-latex'] (e) -- (a) node[pos=.5,above] {e};
%
\draw[postaction={transform canvas={yshift=-2mm},draw}]
[->] (0.6,0) -- (1.6,0) node[pos=.5,above] {$f_{i}$} node[pos=0.5,below, yshift=-2mm] {$f_{j}$};
\end{tikzpicture}   
}
\subfigure[pullback diagram]{
\begin{tikzpicture}
\path (-1.5,1.5) node[] (S) {$E$};
\path (-1.5,-1.5) node[] (L) {$B$};
\path (1.5,1.5) node[] (R) {$A$};
\path (1.5,-1.5) node[] (D) {$C$};
\draw[arrows=-latex'] (S) -- (R) node[pos=.5,above] {$e_{A}$};
\draw[arrows=-latex'](S) -- (L) node[pos=.5,left] {$e_{B}$};
\draw[postaction={transform canvas={yshift=-2mm},draw}]
[->] (-1,-1.5) -- (1,-1.5) node[pos=.5,above] {$f_{i}$} node[pos=0.5,below, yshift=-2mm]{$f_{j}$};
\draw[postaction={transform canvas={xshift=-2mm},draw}]
[->] (1.5,1) -- (1.5,-1) node[pos=.5,right] {$f_{i}$} node[pos=0.5,left, xshift=-2mm] {$f_{j}$};
\end{tikzpicture}
}
\caption{Equivalence of the considered equalizer diagrams and pullback diagrams.}
\label{pbfig}
\end{figure}
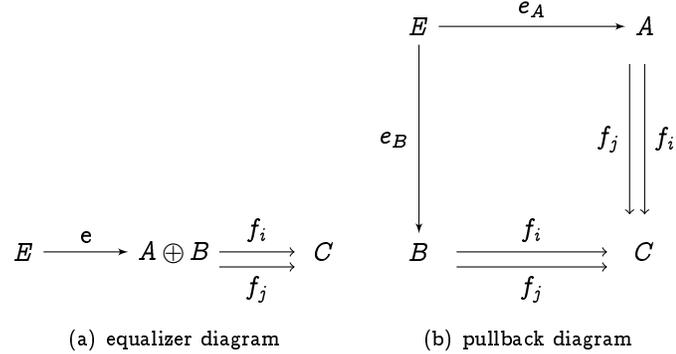

\begin{remark}
Equalizers (coequalizers) are unique up to isomorphism. The equalizing map $e$ (coequalizing map $h$) is always a monomorphism (epimorphism) and, monomorphisms (epimorphisms) are injective (surjective) maps in the context of vector spaces and linear maps. 
Hence, the equalizing map $e$ (coequalizing map $h$) is an isomorphism iff $f=g$, for all $f,g\in Hom(A,C)$ (cf. \cite{La98}).
\end{remark}

\begin{remark}
Sometimes, the equalizer is identified with the object $E$ while the morphism $e$ can be taken to be the inclusion map of $E$ as a subset of $A$. Dual remarks hold for coequalizers.
As we assume that all diagrams of vector spaces commute, the categorical concepts of \emph{equalizer} and \emph{coequalizer} can be adapted to the framework of this paper as in Definition \label{coeq}.
\end{remark}


\subsection{A construction for the lattice operations}
In the following we will discuss a natural generalization of the equalizer and coequalizer constructions where the definition of the lattice operations exhibited in Section \ref{Structural Theory} is based. 

\begin{remark}
Let us first recall that linear maps from common sources are maps from the direct sum of those sources, and that linear maps to common targets are maps to the direct sum of those targets.  
To see this consider the vector spaces $A$, $B$, $C$ and $D$, and the linear maps $f:A\rightarrow C$ and $g:B\rightarrow C$. 
Then we can construct $f\oplus g:A\oplus B \rightarrow C$ defining it by $f\oplus g(z)=f(x) + g(y)$ for all $z=x+y\in A\oplus B$.
Moreover, given the linear maps $f:A\rightarrow C$ and $h:A\rightarrow B$ we can construct $f\oplus h:A\rightarrow C\oplus B$ by defining it as $f\oplus h(x)=f(x)\oplus h(x)$, for all $x\in A$.
Conversely, any linear map $f:A\oplus B\rightarrow C$ ``splits'' to maps $f_{A}:A\rightarrow C$ and $f_{B}:B\rightarrow C$ such that $f(x)=f_{A}(x)+f_{B}(x)$ for all $x\in A\oplus B$. Dually, a map $g: D\rightarrow A\oplus B$ can also ``split'' into maps $g_{A}:D\rightarrow A$ and $g_{B}:D\rightarrow B$ such that $g(x)=g_{A}(x)+g_{B}(x)$ for all $x\in A\oplus B$ with $g_{A}(x)\in A$ and $g_{B}(x)\in B$. 
\end{remark}

\begin{remark}\label{existb}
Whenever $A$, $B$, $C$ and $D$ are vector spaces of a given diagram $\DD$,
\begin{itemize}
\item[(i)] if $A,B\leq C$ then $A\oplus B\leq C$;
\item[(ii)] if $D\leq A,B$ then $D\leq A\oplus B$.
\end{itemize}
Indeed, $(i)$ follows from the fact that the direct sum is the coproduct in the category of vector spaces and linear maps. 
To see $(ii)$ consider the inclusion maps $i_{A}:A\rightarrow A\oplus B$ and $i_{B}:B\rightarrow A\oplus B$ and observe that, due to the hypothesis, there exist maps $f:D\rightarrow A$ and $g:D\rightarrow B$. Thus, the compositions $i_{A}\circ f$ and $i_{B}\circ g$ ensure the inequality $D\leq A\oplus B$.  
\end{remark}


\begin{definition}
Let $A$ and $B$ be vector spaces and $I$ and $J$ be arbitrary sets. 
Consider the family of linear maps from $A\oplus B$ to all vector spaces with common sources $A$ and $B$, i.e., 
\[
\Ff_k = \{f_i: A\oplus B\rightarrow X_k \mid \text{ for all vector spaces } X_k\geq A,B \text{ and }  i\in I \}
\] 
and, dually, the family of linear maps from all vector spaces with common targets $A$ and $B$ to $A\oplus B$, i.e.,
\[
\G_k = \{g_i: Y_k\rightarrow A\oplus B \mid \text{ for all vector spaces } Y_k \leq A,B \text{ and } i\in I \}.
\]

Define $A\wedge B$ to be the kernel set $\EE $ of the equalizer of the linear maps of the family $\Ff_{k}$, $eq(\oplus_{k\in J} \Ff_{k})$, and $A\vee B$ to be the quotient set $\CC$ of the coequalizer of the linear maps of the family $\G_{k}$, $coeq(\oplus_{k\in J} \G_{k})$. 
These operations are well defined due to Remark \ref{existb}.
Moreover, as all considered maps on the construction of the kernel set $A\wedge B$ and the quotient set $A\vee B$ are linear, $A\wedge B$ is a subalgebra of $A\oplus B$ and $A\vee B$ is a quotient algebra of $A\oplus B$. Both of them constitute vector spaces.
\end{definition}


\begin{remark}
We shall discuss now the equalizer set and quotient set constituting the meet and the join, respectively, of vector spaces in a given diagram.
Whenever $A$ and $B$ are vector spaces with common targets $C_{1}$ and $C_{2}$, i.e., such that $A,B\leq C_{1}$ and $A,B\leq C_{2}$, we can consider linear maps $f_{1},g_{1}:A \oplus B \rightarrow C_{1}$ and $f_{2},g_{2}:A \oplus B \rightarrow C_{2}$, and the equalizers $eq(f_{1},g_{1})$ and $eq(f_{2},g_{2})$ with kernel sets $\EE_{1}=\set{x\in A \mid f_{1}(x)=g_{1}(x)}$ and $\EE_{2}=\set{x\in A \mid f_{2}(x)=g_{2}(x)}$, respectively. 
Define $eq(f_{1},g_{1}) \vee eq(f_{2},g_{2})$ to be the pair $(E,e)$ with kernel set determined by the union of equations in $\EE_{1}$ and in $\EE_{2}$, i.e., 
\[
\EE_{1,2}=\set{x\in A\oplus B \mid f_{k}(x)=g_{k}(x), k\in \set{1,2}}
\]
and corresponding inclusion map $e:E\hookrightarrow A\oplus B$. 
This new pair is an equalizer of all the considered maps from $A\oplus B$ to $\oplus_{k\in \set{1,2}} C_{k}$ (as represented in Figure \ref{figlim} (a)). 
Indeed $\EE_{1,2}=\EE_{1}\cap \EE_{2}$: if we look at $\EE_{k}$ as a set of equations, $\EE_{1,2}$ is determined by both the defining equations in $\EE_{1}$ and $\EE_{2}$, i.e., 
\[
\EE_{1}\cap \EE_{2}=\set{x\in A\oplus B\mid (f_1(x),f_2(x))=(g_1(x),g_2(x))}=\set{x\in A\oplus B\mid f_1\oplus f_2 (x) = g_1\oplus g_2 (x)}.
\] 

Dually, whenever $A$ and $B$ are vector spaces with common sources $D_{1}$ and $D_{2}$, i.e., such that $D_{1}\leq A,B$ and $D_{2}\leq A,B$ we can consider linear maps $f_{1},g_{1}:D_{1} \rightarrow  A\oplus B$ and $f_{2},g_{2}:D_{2} \rightarrow A \oplus B  $. 
The quotient sets of the coequalizers $coeq(f_{1},g_{1})$ and $coeq(f_{2},g_{2})$ are quotients of $A\oplus B$ by the equivalences $\theta_{1}= \langle \set{(f_{1}(x),g_{1}(x))\mid x\in D_{1}}\rangle$ and $\theta_{2}=\langle \set{(f_{2}(x),g_{2}(x))\mid x\in D_{2}}\rangle$, respectively. 
Define $coeq(f_{1},g_{1}) \vee coeq(f_{2},g_{2})$ to be the pair $(H,h)$ with underlying set constituted by the quotient of $A\oplus B$ by the equivalence $\theta $ generated by 
\[
\langle \theta_{1}\cup \theta_{2}\rangle=\langle \set{(f_{k}(x),g_{k}(x))\mid  x\in D_{1}\cap D_{2}, i\in \set{1,2}}\rangle
\] 
and corresponding linear map $h:A\oplus B\hookrightarrow A\oplus B/\theta$. 
This new pair is a coequalizer of all the considered maps from $\oplus_{k\in \set{1,2}} D_{k}$ to $A\oplus B$ (as represented in Figure \ref{figlim} (b)). 
Whenever $D_{1}\cap D_{2}=\set{0}$, the respective equivalence $\theta $ is $\langle (0,0) \rangle = \set{(0,0)}=0$ and thus $\CC_{1,2}=A\oplus B / 0 \cong A\oplus B$.
Observe that we are generating the equivalence that includes all the possible pairs given by the linear maps to each $D_{k}$. 
In fact, the union of equivalences is not, in general, an equivalence but it is clearly included in the equivalence generated by this union. 
The quotient by this bigger equivalence $\theta$, generated by the union of all the others, will correspondent to the smallest quotient above $A$ and $B$ in the requested conditions. 
To see this consider the partition semilattice of quotients of $A$ with the meet operation defined as 
\[
A/\theta_{1} \wedge A/\theta_{2}=\set{x/\theta_{1}\cap x/\theta_{2}\mid x\in A}=A/(\theta_{1}\cap \theta_{2}).
\] 
Thus, $A/\theta \subseteq  A/\theta_{1} \wedge A/\theta_{2}$. 
In general, whenever $\theta$ is the equivalence generated by the union of the equivalences $\theta_{k}$ corresponding to each vector space $Y_{k}$ above $A$ and $B$, then 
\[
A/\theta \subseteq \bigwedge_{k\in J} (A/\theta_{k}).
\]
\end{remark}

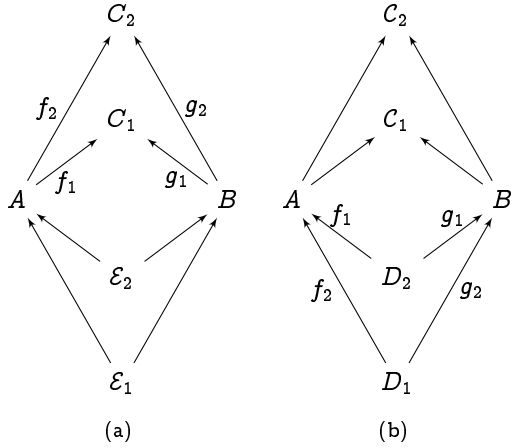
\begin{figure}
\subfigure[]{
\begin{tikzpicture}[scale=.7]

  \node (1) at (0,3.5) {$C_{2}$};
  \node (c) at (0,1.5) {$C_{1}$};
  \node (a) at (-2,0) {$A$};
  \node (b) at (2,0) {$B$};
  \node (d) at (0,-1.5) {$\EE_{2}$};
  \node (0) at (0,-3.5) {$\EE_{1}$};
  
\draw[arrows=-latex'] (a) -- (1) node[pos=.5,left] {$f_{2}$};
\draw[arrows=-latex'](b) -- (1) node[pos=.5,right] {$g_{2}$};
\draw[arrows=-latex'] (0) -- (a) node[pos=.5,above] {};
\draw[arrows=-latex'](0) -- (b) node[pos=.5,right] {};
\draw[arrows=-latex'] (a) -- (c) node[pos=.5,below] {$f_{1}$};
\draw[arrows=-latex'](b) -- (c) node[pos=.5,below] {$g_{1}$};
\draw[arrows=-latex'] (d) -- (a) node[pos=.5,above] {};
\draw[arrows=-latex'](d) -- (b) node[pos=.5,right] {};

\end{tikzpicture}  
}
\subfigure[]{
\begin{tikzpicture}[scale=.7]

  \node (1) at (0,3.5) {$\CC_{2}$};
  \node (c) at (0,1.5) {$\CC_{1}$};
  \node (a) at (-2,0) {$A$};
  \node (b) at (2,0) {$B$};
  \node (d) at (0,-1.5) {$D_{2}$};
  \node (0) at (0,-3.5) {$D_{1}$};
  
\draw[arrows=-latex'] (a) -- (1) node[pos=.5,left] {};
\draw[arrows=-latex'](b) -- (1) node[pos=.5,right] {};
\draw[arrows=-latex'] (0) -- (a) node[pos=.5,left] {$f_{2}$};
\draw[arrows=-latex'](0) -- (b) node[pos=.5,right] {$g_{2}$};
\draw[arrows=-latex'] (a) -- (c) node[pos=.5,below] {};
\draw[arrows=-latex'](b) -- (c) node[pos=.5,below] {};
\draw[arrows=-latex'] (d) -- (a) node[pos=.5,above] {$f_{1}$};
\draw[arrows=-latex'](d) -- (b) node[pos=.5,above] {$g_{1}$};

\end{tikzpicture}
}
\caption{Diagram representation of the meet and join of vector spaces $A$ and $B$ of a given diagram $\DD$ when $A$ and $B$ have more than one common target (case (a) with targets $C_1$ and $C_2$) or more than one common source (case (b) with sources $D_1$ and $D_2$), respectively.}
\label{figlim}
\end{figure}

\begin{proposition}\label{general}
Let $I$ be an index set and $A$, $B$, $C_{i}$ and $D_{j}$ be vector spaces such that $D_{j}\leq A,B \leq C_{i}$, for all $i,j\in I$.
Consider the families of linear maps $\FF_{k}=\set{f_{ik}:A \oplus B \rightarrow C_{k}}$, $\FF'_{k}=\set{f'_{ik}:D_{k}\rightarrow A\oplus B }$, for some $k\leq i,j$.
Consider also the equalizers $eq(\FF_{k})=(E_{k},e_{k})$ and the coequalizers $coeq(\FF'_{k})=(H_{k},h_{k})$.
Then, 
\begin{itemize}
\item[(i)] the kernel set $E_k=\EE ((F_{k})_{k\in I})$ is the intersection of all the kernel sets corresponding to the equalizers of linear maps of the family $(F_{k})_{k\in I}$.  
\item[(ii)] the quotient set $H_k=\CC((F'_{k})_{k\in I})$ is constituted by the quotient of $A\oplus B$ by the equivalence generated by the union of all equivalences corresponding to the family of linear maps from $(F'_{k})_{k\in I}$. 
\end{itemize}
\end{proposition}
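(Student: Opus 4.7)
The plan is to reduce both parts to the standard fact that in the abelian category of vector spaces, equalizers compute as kernels of differences and coequalizers as cokernels, and then to exploit the fact that limits of limits are again limits (dually for colimits). This essentially promotes the two-family arguments already given in the remarks preceding the statement to arbitrary index sets, by viewing ``all families at once'' as a single cone with vertex $\bigoplus_{k} C_k$ (respectively $\bigoplus_k D_k$) via Remark \ref{existb}.

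For (i), I would enumerate $\FF_k=\{f_{1k},f_{2k},\ldots\}$ and note that $x\in A\oplus B$ lies in the equalizer $E_k$ exactly when $f_{ik}(x)=f_{jk}(x)$ for every pair $i,j$, i.e.\ $E_k=\bigcap_{i,j}\ker(f_{ik}-f_{jk})$. Forming the equalizer of the aggregated family $\bigcup_{k\in I}\FF_k$ (viewed as a family of maps $A\oplus B\to\bigoplus_k C_k$) then gives, by the same reasoning, $\bigcap_k\bigcap_{i,j}\ker(f_{ik}-f_{jk})=\bigcap_{k\in I} E_k$. Universality is automatic: any $e':E'\to A\oplus B$ equalizing every $f_{ik}$ factors through each $E_k$ uniquely, hence through the intersection, and uniqueness is forced because each inclusion $E_k\hookrightarrow A\oplus B$ is monic. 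For (ii), I would dualize: each coequalizer is a cokernel $H_k=(A\oplus B)/\theta_k$, where $\theta_k$ is the congruence generated by $\{(f'_{ik}(x),f'_{jk}(x))\mid x\in D_k,\, i,j\in I\}$. The coequalizer of the aggregated family is the quotient by the least congruence containing every $\theta_k$; because the set of congruences on a vector space forms a complete lattice under inclusion, this least congruence is precisely $\langle\bigcup_{k\in I}\theta_k\rangle$. Universality then follows by the dual chase.

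The main obstacle is bookkeeping rather than any deep difficulty: when the codomains $C_k$ (resp.\ domains $D_k$) vary with $k$, one must be careful that the ``union of families'' is recast as a single cone through $\bigoplus_k C_k$ (resp.\ $\bigoplus_k D_k$) so that a single equalizer/coequalizer is well defined. The splitting of maps into and out of direct sums recalled earlier guarantees that no spurious identifications are introduced by the biproduct structure, so the intersection (resp.\ generated congruence) description really captures the combined limit (resp.\ colimit).
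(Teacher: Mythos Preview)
Your proposal is correct and follows essentially the same route as the paper: both arguments identify each $E_k$ as a solution set of pairwise equations (equivalently, an intersection of kernels of differences), then take the intersection over $k$, with universality obtained by combining the individual universal properties; the coequalizer side is handled by the dual passage to the congruence generated by the union of the $\theta_k$. Your explicit recasting of the aggregated family as a single cone through $\bigoplus_k C_k$ (resp.\ $\bigoplus_k D_k$) is a helpful clarification that the paper leaves implicit, but it does not change the substance of the argument.
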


\begin{proof}
Consider the kernel set of  the equalizer $eq((F_{k})_{k\in I})$ given by
 \[
 \EE=\bigcap_{k\in J} \set{eq(f_{ik},f_{jk}) \mid f_{ik},f_{jk}\in Hom(A\oplus B, D_k)} \text{,  that is,} 
 \]
 
 \[
 \EE= \set{x\in A \oplus B \mid f_{ik}(x)=f_{jk}(x)\text{, for some } f_{ik},f_{jk}\in \bigcup_{k\in J} Hom(A\oplus B, D_k)}.
 \]
 
The corresponding linear map $e$ is the inclusion map $E\hookrightarrow A\oplus B$.
Furthermore, the universal property derives from the conjugation of the universal properties valid to each equalizer $eq(\FF_{k})$.

Dually, observe that, for each $k\in J$, the quotient set of the coequalizer $coeq((F_{k})_{k\in I})$ is given by: the factor $A \oplus B/\theta$, where $\theta $ is the equivalence generated by the set 
 \[
 \bigcap_{k\in J} \set{(f_{ik}(x),f_{jk}(x)) | x \in Y_k \text{  and  } f_{ik},f_{jk}\in Hom(D_{k}, A\oplus B)} \text{,  that is,} 
 \]
 
 \[
  \set{(f_{ik}(x),f_{jk}(x)) | x \in \bigcup_{k\in J} Y_k \text{  and  } f_{ik},f_{jk}\in \bigcup_{k\in J} Hom(D_{k}, A\oplus B)}.
 \]
 
 The corresponding linear map $h$ is the canonical projection map $A\oplus B\hookrightarrow A\oplus B/\theta$.
Furthermore, the universal property again derives from the conjugation of the universal properties valid to each coequalizer $coeq(\FF_{k})$.
\end{proof}

\begin{corollary}\label{generalb}
Let $C_1<C_2<\dots C_n$, $D_m<D_{m-1}<\dots C_1$ and $A,B$ be vector spaces in a diagram such that $D_1<A,B<C_n$. 
Then, the equalizer of $\bigcup_k Hom(A\oplus B, C_k)$ is just the equalizer of $Hom(A\oplus B, C_n)$ 
while  the coequalizer of $\bigcup_k Hom(D_k, A\oplus B)$ is just the coequalizer of $Hom(D_m,A\oplus B)$.
\end{corollary}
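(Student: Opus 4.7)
The plan is to invoke Proposition~\ref{general} to write both the left- and right-hand side in concrete set-theoretic form, then use the chain structure of the $C_k$'s (respectively the $D_k$'s) to collapse the intersection (respectively generated equivalence) to a single term.

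For (i), Proposition~\ref{general}(i) identifies the equalizer of $\bigcup_k \Hom(A\oplus B, C_k)$ with $\bigcap_{k=1}^n E_k$, where
\[
E_k = \{x \in A\oplus B : f(x) = g(x) \text{ for all } f, g \in \Hom(A\oplus B, C_k)\}.
\]
The inclusion $\bigcap_k E_k \subseteq E_n$ is immediate since $E_n$ is one of the terms in the intersection. For the reverse inclusion, I fix $x \in E_n$, any $k<n$, and any $f, g : A\oplus B \to C_k$. Since $C_k < C_n$ sits in a chain, composing the chain maps provides a linear map $\iota_{k,n} : C_k \to C_n$, and both $\iota_{k,n} \circ f$ and $\iota_{k,n} \circ g$ lie in $\Hom(A\oplus B, C_n)$. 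Hence $\iota_{k,n}(f(x)) = \iota_{k,n}(g(x))$; under the paper's convention on $\leq$ (see the remark following its definition) that chain maps behave as monomorphisms, this yields $f(x) = g(x)$, so $x \in E_k$ for every $k$.

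For (ii), the argument is dual. Proposition~\ref{general}(ii) expresses the coequalizer of $\bigcup_k \Hom(D_k, A\oplus B)$ as $A\oplus B/\theta$, where $\theta$ is generated by the union of the equivalences
\[
\theta_k = \langle\, (f(y),g(y)) : f, g \in \Hom(D_k, A\oplus B),\ y \in D_k \,\rangle.
\]
Using the chain map $\pi_{m,k} : D_m \to D_k$, every generator of $\theta_k$ is realised as $(f\circ\pi_{m,k}(z),g\circ\pi_{m,k}(z))$ for $z\in D_m$, i.e.\ as a generator coming from $\Hom(D_m, A\oplus B)$. Consequently $\theta$ is already generated by $\theta_m$ and the coequalizer collapses to $\coeq(\Hom(D_m, A\oplus B))$.

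The main obstacle I anticipate is rigorously justifying the two elementwise steps: the implication $\iota_{k,n}(f(x))=\iota_{k,n}(g(x))\Rightarrow f(x)=g(x)$ in (i), and the fact that every $y\in D_k$ arises as $\pi_{m,k}(z)$ for some $z\in D_m$ in (ii). In the abstract framework these amount to monicity (resp.\ epicity) of the chain maps, which hold in the motivating examples (subspace inclusions in a filtration) but are not forced by the bare existence-of-a-map definition of $\leq$. A cleaner proof would dispense with elements altogether and invoke the universal property of (co)equalizers to show that any (co)cone over the big family factors uniquely through the (co)cone over the single-object family at $C_n$ (respectively $D_m$), delivering the identification at the level of the universal object.
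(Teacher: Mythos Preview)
Your overall strategy---reduce to Proposition~\ref{general} and then exploit the chain structure through commutativity of the diagram---is exactly the paper's approach; its proof is the single line ``This result is due to the assumption of the commutativity of all diagrams together with Proposition~\ref{general}.''

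However, the gap you flag is real, and tracing it through shows that the difficulty lies in the \emph{statement} rather than the method.  Commutativity gives you the inclusions in the opposite direction from the ones you need.  For~(i), with $\iota_{k,n}:C_k\to C_n$ the chain map, commutativity forces $f^n=\iota_{k,n}\comp f^k$ for the diagram maps $f^k:A\oplus B\to C_k$; hence $f^k(x)=g^k(x)$ implies $f^n(x)=g^n(x)$, i.e.\ $E_k\subseteq E_n$.  Thus $\bigcap_k E_k=E_1$, not $E_n$.  Dually, for~(ii) the chain map $\pi_{m,k}:D_m\to D_k$ gives $\theta_m\subseteq\theta_k$ as generating sets, so $\langle\bigcup_k\theta_k\rangle=\langle\theta_1\rangle$, not $\langle\theta_m\rangle$.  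In other words, both the (co)equalizer collapse to the term \emph{closest} to $A,B$ in the chain (smallest $C$, largest $D$), which is also what your proposed universal-property argument would deliver: a (co)cone over a chain is determined by its value at the initial (terminal) object.  With the indices read this way, the monicity/epicity hypotheses become unnecessary and the paper's one-line proof is complete.

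A separate point: the remark you cite after the definition of $\leq$ does not assert that diagram maps are monic; it only explains why antisymmetry holds (maps both ways force an isomorphism).  So your appeal to ``the paper's convention on $\leq$'' to justify injectivity of $\iota_{k,n}$ is not supported by the text.
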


\begin{proof}
This result is due to the assumption of the commutativity of all diagrams together with proposition \ref{general}.
\end{proof}

\begin{remark}
Both Proposition \ref{general} and Corollary \ref{generalb} now link to Theorem \ref{complete} establishing the completeness of persistence lattices.
Indeed, both of the lattice operations extend to arbitrary
joins $\bigjoin_i D_i$ given by
\[
\bigwedge S = \set{x\in X: f_{i}(x)=f_{j}(x) \text{,  for all  } f_{i},f_{j}\in \bigcup_{k}Hom(X,C_{k})}.
\]
and meets $\bigmeet_i D_i$ given by 
\[
\bigvee_{\ell} A_{\ell}=(\oplus_{\ell} A_{\ell})/\bigcap_k \langle (f_{i}(x),f_{j}(x))\mid x\in \oplus_{k} D_{k}\rangle
\]
that are a great deal dependent from the biggest element of the correspondent total orders determined by $\bigcup_{k}Hom(X,C_{k})$ and $\oplus_{k} D_{k}$, respectively.
\end{remark}


%
\section{Glossary of Definitions}
\label{Glossary of Definitions}

In the following we present a list of basic concepts of lattice theory and category theory that will help the reader, that is unfamiliar with such, through this paper. These concepts are presented by order of appearance. For more details please read \cite{Ba40}, \cite{Gr71} or \cite{La98}.

\begin{itemize}[label=\ding{75}]

\item Preorder $\equiv$ a binary relation $R$ that satisfies \emph{reflexivity} (i.e., for all $x\in A$, $xRx$) and \emph{transitivity} (i.e., for all $x,y,z\in A$, $xRy$ and $yRz$ implies $xRz$).

\item Partial order $\equiv$ a preorder $\leq$ such that, for all $x,y\in A$, $x\leq y$ and $y\leq x$ implies $x=y$ (antisymmetry).
 
\item Poset $\equiv$ an order structure $(P, \leq)$ consisting of a set $P$ and a partial order $\leq$. 

\item Total order $\equiv$ a poset such that every pair of elements is related, that is, for all $x,y\in A$, $x\leq y$ or $y\leq x$. 

\item Antitotal order $\equiv $ a partial order for which no two distinct elements are related.

\item Lattice $\equiv $ a poset for which all pairs of elements have an infimum and a supremum.

\item Complete lattice $\equiv $ a poset for which every subset has a supremum and an infimum.
 
\item Associativity $\equiv $ for all $x,y,z$, $x\wedge (y\wedge z) = (x \wedge y)\wedge z$ and $x\vee (y\vee z) = (x \vee y)\vee z$.

\item Idempotency $\equiv $ for all $x$, $x\wedge x = x = x\vee x$.

\item Comutativity $\equiv $ for all $x,y$, $x\wedge y=y\wedge x$ and $x\vee y=y\vee x$.

\item Absorption $\equiv $ for all $x,y$, $x\wedge (x\vee y)=x=x\vee (x\wedge y)$.

\item Modularity $\equiv $ for all $x,y,z$, $y\leq x$ implies $x\wedge (y\vee z)=y\vee (x\wedge z)$.

\item Distributivity $\equiv $ for all $x,y,z$,  $x\wedge (y\vee z)=(x\wedge y)\vee (x\wedge z)$ or $x\vee (y\wedge z)=(x\vee y)\wedge (x\vee z)$.

\item Heyting algebra $\equiv $ a bounded distributive lattice such that for all $a$ and $b$ there is a greatest element $x$ such that $a\wedge x\leq b$.

\item Implication operation, $a\Rightarrow b$ $\equiv $ the greatest element $x$ in a Heyting algebra such that $a\wedge x\leq b$.

\item Join-irreducible element $\equiv $ an element $x$ for which $x=y\vee z$ implies $x=y$ or $x=z$, for all $y,z$.

\item Meet-irreducible element $\equiv $ an element $x$ for which $x=y\wedge z$ implies $x=y$ or $x=z$, for all $y,z$.

\item Boolean algebra $\equiv$ a distributive lattice with a unary operation $\neg$ and nullary operations $0$ and $1$ such that $a\vee 0 = a$ and $a\wedge 1=a$, as well as $a\vee \neg a = 1$ and  $a\wedge \neg a = 0$.

\item Category $\equiv $ a class of objects and morphisms between them such that their composition is a well defined associative operation and that an identity morphism exists.

\item Functor $\equiv $ a map between two categories $A$ and $B$ that associates to each object of $A$ an object of $B$ and to each morphism in $A$ a morphism in $B$ so that the image of an identity morphism in $A$ is an identity morphism in $B$, and the image of the composition of morphisms in $A$ is the composition of their images in $B$.

\item Pullback $\equiv $ the limit of a diagram constituted by two morphisms with a common codomain.

\item Pushout $\equiv $ the colimit of a diagram constituted by two morphisms with a common domain.

\item Equalizer $\equiv $ the limit of the diagram consisting of two objects $X$ and $Y$ and two parallel morphisms $f, g : X \rightarrow Y$.

\item Coequalizer $\equiv $ the colimit of the diagram consisting of two objects $X$ and $Y$ and two parallel morphisms $f, g : X \rightarrow Y$ (dual concept of equalizer).

\end{itemize}


\section*{Acknowledgments}

The authors would like to thank to Karin Cvetko-Vah for several discussions on duality that helped clarifying some ideas presented here; to Mikael Vejdemo-Johansson for the suggestion of topos theory and the insights on its relevance to the foundations of persistence homology, to Margarita Ramalho for the relevant communications on topics of lattice theory and topology; and to Dejan Govc for the careful reading of this paper, his questions and his help on finding several typos in the first version submitted. 



\end{document}